\theoremstyle{theorem}
\newtheorem{theorem}{Theorem}
\newtheorem{conjecture}{Conjecture}
\newtheorem{proposition}{Proposition}
\newtheorem{corollary}{Corollary}
\newtheorem{lemma}{Lemma}
\theoremstyle{definition}
\newtheorem*{definition}{Definition}
\newtheorem*{remark}{Remark}
\newtheorem*{example}{Example}
\newcommand{\F}{\mathbb{F}}
\newcommand{\C}{\mathbb{C}}
\newcommand{\Z}{\mathbb{Z}}
\newcommand{\N}{\mathbb{N}}
\newcommand{\Q}{\mathbb{Q}}
\newcommand{\cF}{{\mathcal F}}
\newcommand{\bea}{\begin{eqnarray*}}
\newcommand{\eea}{\end{eqnarray*}}
\newcommand{\mb}{{\rm MB}}
\newcommand{\mfb}{{\rm MFB}}
\begin{document}

\title{Polynomials in Base $\boldsymbol{x}$ and the Prime-Irreducible Affinity}
\author[Fusun Akman]{\tiny{Fusun Akman} \\ \\
	Department of Mathematics \\ Illinois State University\\ 
	Normal, IL 61790-4520, USA}
\thanks{\url{akmanf@ilstu.edu}}
 
 \date{}

\maketitle

\begin{abstract}
Arthur Cohn's irreducibility criterion for polynomials with integer coefficients and its generalization connect primes to irreducibles, and integral bases to the variable $x$. As we follow this link, we find that these polynomials are ready to spill two of their secrets: (i) There exists a unique ``base-$x$'' representation of such polynomials that makes the ring $\mathbb{Z}[x]$ into an ordered domain; and (ii) There is a 1-1 correspondence between positive rational primes $p$ and certain infinite sets of irreducible polynomials $f(x)$ that attain the value $p$ at sufficiently large $x$, each generated in finitely many steps from the $p$th cyclotomic polynomial. The base-$x$ representation provides practical conversion methods among numeric bases (not to mention a polynomial factorization algorithm), while the prime-irreducible correspondence puts a new angle on the Bouniakowsky Conjecture, a generalization of Dirichlet's Theorem on Primes in Arithmetic Progressions.
\end{abstract}
                                                                                                                    
\noindent \keywords{\small{\bf Keywords:}   Primes represented by polynomials; Cohn irreducibility criterion; Bouniakowsky hypothesis; polynomial factorization; irreducibility test; polynomials in base $x$.} 

\bigskip\noindent\keywords{\small{\bf Mathematics Subject Classification:}  
	{11A41; 11A63; 11N32; 11Y05}


\section{Introduction.}


The fact that {\Romanbar{X}} is the Roman numeral for 10, a common base, whereas $x$ is a routine variable for polynomial expressions, is hardly a symbolic conspiracy worthy of the time of Robert Langdon \cite{Br}. Neither would Dr.\ Langdon  spare a smirk for the timeworn joke ``let $x$ be a number, and 10 be a letter.'' However...


\subsection{Two analogies and a conjecture.}


 Let  $\Z$ and $\N$ denote the sets of  integers and positive  integers respectively. Polynomials in one variable, $x$,  with integer coefficients, and representations of positive integers in a given base, $b$, are reasonably analogous. Also, we not only think of prime numbers and irreducible polynomials in $\Z[x]$ as   playing similar roles in their respective domains, but expect irreducible polynomials to attain some prime values as well. 
A concrete connection between these analogies is given by Arthur Cohn's Irreducibility Criterion (CIC)~\cite{PS}, where the decimal representation of a positive rational  \textit{prime} $p$ is converted into an \textit{irreducible polynomial} by replacing the base 10 with $x$, which, in turn, takes the value $p$ at $x=10$:
\begin{theorem}[CIC]
	If a prime $p$ is expressed as $p=[a_n\cdots a_1a_0]_{10}$ in base 10, where the integers $a_i$ are digits between 0 and 9, then 	the polynomial $f(x)=a_nx^n+\cdots +a_1x+a_0$ is irreducible in $\Z[x]$.
\end{theorem}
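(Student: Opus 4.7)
The plan is to argue by contradiction. Suppose $f(x)=g(x)h(x)$ in $\Z[x]$ with both factors nonconstant. Then $p=f(10)=g(10)h(10)$ is a factorization of a rational prime, so one of $g(10),h(10)$ equals $\pm 1$; I may assume $|g(10)|=1$. Factoring $g$ over $\C$ as $g(x)=c\prod_j (x-\alpha_j)$ with $c\in\Z\setminus\{0\}$ and each $\alpha_j$ a root of $f$, I obtain
\[
1=|g(10)|=|c|\prod_j |10-\alpha_j|\ \ge\ \prod_j |10-\alpha_j|,
\]
so the contradiction will follow from the root-location statement
\[
|10-\alpha|>1 \quad\text{for every complex root }\alpha\text{ of }f.
\]

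Establishing this bound is the main technical step, and I would carry it out in two stages. First, a Cauchy-type estimate: from $f(\alpha)=0$ with $a_n\ge 1$, dividing by $\alpha^n$ (the case $\alpha=0$ cannot occur, since $a_0=0$ would force $10\mid p$) and using $0\le a_i\le 9$ gives $|\alpha|<1+9/a_n\le 10$ whenever $|\alpha|>1$; when $|\alpha|\le 1$ we trivially have $|10-\alpha|\ge 9$. Second, and this is the delicate part, one must further exclude the closed unit disk about $10$. A classical route is to bound $\operatorname{Re}(\alpha)$ rather than $|\alpha|$, yielding $\operatorname{Re}(\alpha)<\tfrac{1}{2}(1+\sqrt{37})\approx 3.54$ by taking the real part of $a_n\alpha^n=-\sum_{k<n}a_k\alpha^k$, inserting $|a_k|\le 9$, and solving the resulting quadratic in $\operatorname{Re}(\alpha)$. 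Since $3.54<9$, this places every root $\alpha$ at horizontal distance more than $1$ from $10$, so $|10-\alpha|>1$.

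The decisive obstacle is stage two of the root-location argument. The naive modulus bound $|\alpha|<10$ does not by itself rule out $|10-\alpha|\le 1$: a complex root with $|\alpha|$ just under $10$ could still lie within distance $1$ of the point $10$. Sharpening to a real-part bound that stays well away from $10$ uses the digit bound $a_i\le 9$ in an essential way (not merely $a_i\ge 0$); without such a ceiling on the coefficients the criterion fails, and this is precisely where the base-$10$ structure of the representation enters. Once the root-location lemma is in hand, the closing inequality displayed above delivers the contradiction, completing the proof. The same scheme with $b$ in place of $10$ and $b-1$ in place of $9$ will give the generalization hinted at in the abstract.
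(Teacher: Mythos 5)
The first thing to say is that the paper contains no proof of this statement: CIC is quoted from P\'{o}lya--Szeg\H{o} \cite{PS}, and the proofs of it and its generalization live in \cite{Co} and \cite{Mu}. The paper's own irreducibility test in this spirit (Corollary~\ref{irredtest}) needs the evaluation point to exceed a Mignotte-type bound far larger than $10$, and the Remark following it concedes that even Murty's sharper hypothesis ``falls just short of proving Cohn's Irreducibility Criterion.'' So you are not competing with an in-paper argument; you are reconstructing the Brillhart--Filaseta--Odlyzko/Murty proof. Your architecture is the right one: a factorization into two nonconstant factors forces $|g(10)|=1$ (and for $p\geq 11$ the digits satisfy $a_i\leq 9<p$, so $f$ is primitive and reducibility really does mean two nonconstant factors), and the contradiction follows once every complex root $\alpha$ of $f$ satisfies $|10-\alpha|>1$.

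The gap is in the derivation of the root-location bound, precisely the step you flag as ``the delicate part.'' Taking real parts of $a_n\alpha^n=-\sum_{k<n}a_k\alpha^k$ gives $a_n\operatorname{Re}(\alpha^n)=-\sum_{k<n}a_k\operatorname{Re}(\alpha^k)$, and there is no quadratic in $\operatorname{Re}(\alpha)$ to solve here: $\operatorname{Re}(\alpha^k)$ is not a function of $\operatorname{Re}(\alpha)$ alone, and these quantities oscillate in sign. The classical lemma you are invoking is proved differently: divide by $\alpha^n$ to get $a_n+a_{n-1}/\alpha=-\sum_{k\geq 2}a_{n-k}/\alpha^k$; if $\operatorname{Re}(\alpha)>0$ then $\operatorname{Re}(1/\alpha)>0$, so the left-hand side has modulus at least $a_n+a_{n-1}\operatorname{Re}(1/\alpha)\geq 1$ (this uses $a_{n-1}\geq 0$ and $a_n\geq 1$, i.e., the floor on the digits, not only the ceiling), while for $|\alpha|>1$ the right-hand side has modulus less than $9/(|\alpha|^2-|\alpha|)$. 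The resulting quadratic $|\alpha|^2-|\alpha|-9<0$ is in $|\alpha|$, and the conclusion is a dichotomy: either $\operatorname{Re}(\alpha)\leq 0$ or $|\alpha|<\tfrac12(1+\sqrt{37})$. Your stated bound $\operatorname{Re}(\alpha)<\tfrac12(1+\sqrt{37})$ does follow from this dichotomy (trivially in the first case, and via $\operatorname{Re}(\alpha)\leq|\alpha|$ in the second), after which $|10-\alpha|\geq 10-\operatorname{Re}(\alpha)>6.4>1$ closes the argument exactly as you say. So the theorem is true by your route and your closing inequality is sound; what is missing is a correct proof of the lemma, since the manipulation you describe would not produce it.
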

For example, since $p=1187$ is prime, the polynomial 
$ f(x)=x^3+x^2+8x+7\in\Z[x]
$ 
is irreducible. Note that the set of irreducibles in $\Z[x]$ is the union of the rational primes   and the non-constant  polynomials that cannot be written as a product of two polynomials of smaller degree. It is a mystery why this amazingly simple result is not as well known as, say, the Eisenstein criterion for irreducibility.  

Cohn's Irreducibility Criterion was generalized to any base $b\geq 2$ by Brillhart, Filaseta, and Odlyzko~\cite{Co} (also see Ram Murty's  exposition and simpler proof~\cite{Mu}); we will call this result ``Generalized Cohn's Irreducibility Criterion,'' or, GCIC.
\begin{theorem}[GCIC] \label{gcic}
	If a prime $p$ is expressed as $p=[a_n\cdots a_1a_0]_{b}$ in some base $b\geq 2$, where the integers $a_i$ are between 0 and $b-1$, then 	the polynomial $f(x)=a_nx^n+\cdots +a_1x+a_0$ is irreducible in $\Z[x]$.		
\end{theorem}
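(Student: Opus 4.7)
The plan is to argue by contradiction. Suppose $f$ factors non-trivially in $\Z[x]$ as $f = gh$; since any common constant factor of the $a_i$ is at most $b-1 < p$, neither $g$ nor $h$ can be a non-unit constant, so both are non-constant. Evaluating at $x = b$ yields $p = f(b) = g(b)\,h(b)$, and primality forces one of $g(b), h(b)$ to be $\pm 1$; without loss of generality, $|g(b)| = 1$.

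The next step converts this numerical constraint into geometry on the roots of $f$. Factoring $g$ over $\C$ as $g(x) = c\prod_{j=1}^{k}(x - \beta_j)$, where $c \in \Z$ is the leading coefficient and the $\beta_j$ are roots of $g$ (hence of $f$), yields
\[
1 \;=\; |g(b)| \;=\; |c|\prod_{j=1}^{k}|b - \beta_j| \;\ge\; \prod_{j=1}^{k}|b - \beta_j|.
\]
Since $k \ge 1$, it suffices to prove the following root-separation claim: every complex root $\beta$ of $f$ satisfies $|b - \beta| > 1$. Granted this, the right-hand side is strictly greater than $1$, a contradiction.

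To establish the claim, I would invoke a Perron-style bound on the zeros of $f$: using only $a_n \ge 1$ and $0 \le a_i \le b-1$, one shows that every complex root $\beta$ satisfies $\mathrm{Re}(\beta) \le (1 + \sqrt{4b - 3})/2$. For $b \ge 3$ this upper bound is at most $b - 1$, so $|b - \beta| \ge b - \mathrm{Re}(\beta) \ge 1$; equality would force $\beta = b - 1 \in \R$, but $f(b - 1) \ge a_n(b-1)^n \ge 1 > 0$ rules that out, so the inequality is strict.

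The main obstacle is the small-base endpoint $b = 2$: here $(1 + \sqrt{4b - 3})/2 = (1 + \sqrt{5})/2$ exceeds $b - 1 = 1$, and the argument above degenerates. The coefficients are binary, $a_i \in \{0, 1\}$, and one must rule out roots of $f$ in the closed unit disk about $2$ by a separate, sharper estimate that exploits $a_n = 1$ and the absence of positive real zeros of $f$ (so that, for instance, $f$ cannot vanish on the real segment $[1,3]$, and the complex part of the disk is handled by a tailored modulus estimate). The technical weight of the theorem concentrates in this endpoint case rather than in the main-line argument.
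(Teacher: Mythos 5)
The paper offers no proof of this theorem: it is stated as a known result of Brillhart, Filaseta, and Odlyzko \cite{Co}, with Murty's article \cite{Mu} cited for a simpler treatment, so there is no in-paper argument to measure yours against. Your outline reproduces the strategy of that literature: a nontrivial factorization $f=gh$ forces, say, $|g(b)|=1$ for a nonconstant factor $g$ (your content remark is fine once $f$ is nonconstant, since then $1\le a_n\le b-1<b\le p$; the degenerate case $p<b$, where $f$ is the constant $p$ and irreducible by convention, should be set aside explicitly), and writing $g(x)=c\prod_j(x-\beta_j)$ reduces everything to the root-location claim that every zero $\beta$ of $f$ satisfies $|b-\beta|>1$. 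For $b\ge 3$ your chain of estimates is correct, granted the Perron/Murty-type zero bound that you invoke but do not prove: the dichotomy ``$\mathrm{Re}(\beta)\le 0$ or $|\beta|<(1+\sqrt{4b-3})/2$'' gives $\mathrm{Re}(\beta)\le b-1$ when $b\ge 3$, and the equality case $\beta=b-1$ is excluded because $f(b-1)>0$.

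The genuine gap is the case $b=2$, which you name but do not close. There the bound $(1+\sqrt{5})/2$ fails to keep the zeros out of the disk $|z-2|\le 1$, and the observation that $f$ has no positive real roots disposes only of the segment $[1,3]$; the entire difficulty of the theorem sits in the ``tailored modulus estimate'' for nonreal zeros near $2$, which is precisely the delicate lemma of \cite{Co} (and the reason, as the paper remarks, that Murty's general criterion ``falls just short'' of Cohn's). Deferring the hardest base to an unspecified separate estimate is not a proof: as written, your argument establishes the statement only for $b\ge 3$. To finish, you must either supply the $b=2$ zero-free region for polynomials with coefficients in $\{0,1\}$ (the technical heart of \cite{Co}) or weaken the theorem to $b\ge 3$.
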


Does any polynomial that takes a prime value have to be irreducible?  Clearly, this is too much to ask. However, it is true that any polynomial that takes distinct prime values infinitely many times has to be irreducible, since one of its factors in any given factorization in $\Z[x]$ must in turn take one of the values  $\pm1$ infinitely many times, and hence, be a constant. Some irreducible polynomials in $\Z[x]$ cannot attain infinitely many prime values: for instance, the prime 2 divides all values of the irreducible polynomial $f(x)=x^2-x+4$ at $x\in\Z$. 

 Let $f(x)\in\Z[x]$ be a non-constant polynomial. We will call $f$ {\em proper} if   its values at integers are relatively prime, and {\em improper} otherwise. Note that constant polynomials are in neither class by definition. We will also use the common expressions {\em content} and {\em primitive polynomial} to mean the greatest common divisor of all coefficients of a polynomial in $\Z[x]$ and a polynomial with content 1 respectively.

An improper polynomial $f(x)$ with all values divisible by a prime $p$ must have a homomorphic image   $\bar{f}(x)\in\Z_p[x]$ that is divisible by $x^p-x$, as all elements of the field $\Z_p$ are roots of $\bar{f}$. It follows that $p\leq \deg \bar{f}\leq \deg{f}$; hence, the number of such primes is limited by the degree of $f$. Moreover, Dirichlet's Theorem on Primes in Arithmetic Progressions asserts that every primitive {\em linear} polynomial in $\Z[x]$ with positive coefficients takes prime values infinitely many times. The Russian mathematician Viktor Bouniakowsky~\cite{Bo}  made the following conjecture for   irreducible polynomials of all degrees in 1857, generalizing Dirichlet's Theorem. We will refer to it as the ``Bouniakowsky Hypothesis,''  or (BH),    from now on. This conjecture, widely believed to be true by those in the know,  is also conspicuously absent from standard algebra textbooks.
\begin{conjecture}[BH: Bouniakowsky Hypothesis] Let $f(x)\in\Z[x]$ be a proper irreducible polynomial with a positive leading coefficient. Then $f(n)$ takes prime values infinitely many times for $n\in\N$. \label{bh}
\end{conjecture}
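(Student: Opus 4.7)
The Bouniakowsky Hypothesis is one of the oldest unresolved generalizations of Dirichlet's theorem (1857), so any genuine plan here is speculative; what I can offer is an approach that leverages the two tools the abstract advertises, namely the base-$x$ representation and the $1$--$1$ correspondence between primes $p$ and infinite families $\mathcal{F}_p$ of irreducibles generated in finitely many steps from the cyclotomic polynomial $\Phi_p(x)$. The strategy is to try to \emph{reverse-engineer} the Cohn picture: instead of starting with a prime and producing an irreducible (as in GCIC), start with a proper irreducible $f(x)$ with positive leading coefficient and exhibit a prime $p$ and a base $n$ for which $f$ coincides with the base-$x$ expansion of $p$ at $n$, forcing $f(n)=p$.

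The steps I would carry out, in order, are as follows. First, make precise the generation procedure behind the abstract's $\mathcal{F}_p$: identify the finite set of "moves" (e.g.\ base changes and coefficient adjustments arising from the ordered-domain structure on $\Z[x]$) and verify that every element of $\mathcal{F}_p$ is an irreducible polynomial attaining the value $p$ at some large $x$. Second, prove a stabilization lemma: for a fixed primitive $f(x)=a_nx^n+\cdots+a_0$ with positive leading coefficient and every $n_0 > \max_i |a_i|$, the "digits" of the integer $f(n_0)$ in the base-$x$ system at $n_0$ are exactly the coefficients $a_i$, so the irreducibility of $f$ translates back (via a GCIC-style argument) to the primality of $f(n_0)$. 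Third, combine the two: show that the orbit of $f$ under the generation moves intersects $\mathcal{F}_p$ for some prime $p$, equivalently that $f$ lies in the union $\bigcup_p \mathcal{F}_p$. Iterating the base over sufficiently many $n_0$ should then produce infinitely many primes, using the properness of $f$ to rule out the local obstruction $p\mid f(n)$ for all $n$ at each small prime $p$.

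The hard part, and the reason BH has resisted all attacks since 1857, is the last step. Ruling out divisibility obstructions one prime at a time is a purely local statement, while BH requires a global density statement: infinitely many members of the sequence $f(n)$ must be prime. Even Dirichlet's linear case needed the nonvanishing of $L(1,\chi)$, and no combinatorial surrogate is known for degrees $\geq 2$. The base-$x$/cyclotomic reformulation above replaces "$f(n)$ is prime" with "$f\in\mathcal{F}_p$ for some $p$," but proving the latter unconditionally is presumably equivalent to BH. I would therefore expect the realistic payoff of this plan to be a \emph{restatement} of BH in the language of $\mathcal{F}_p$-membership, consistent with the abstract's claim of providing "a new angle" rather than a full proof.
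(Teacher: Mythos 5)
You have correctly recognized that the statement labelled (BH) is an open conjecture from 1857: the paper offers no proof of it, and neither do you. What the paper actually does is establish the equivalence you predict as the ``realistic payoff'' of your plan --- Theorem~\ref{BHequal}(3) states that (BH) holds if and only if every PPI-polynomial belongs to $\cF_p$ for some prime $p$, with the generation rules for $\cF_p$ spelled out in Theorem~\ref{partone}. So your proposed endpoint coincides with the paper's contribution, and your self-assessment that the reformulation is ``presumably equivalent to BH'' is exactly right: the reduction is an equivalence, not a proof.

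One step of your plan is mis-stated and would fail if taken literally. In your second step you claim that the irreducibility of $f$ ``translates back (via a GCIC-style argument) to the primality of $f(n_0)$.'' GCIC is a one-way implication: primality of $f(b)$ forces irreducibility of $f$, but an irreducible polynomial can take a composite value at every particular point you test, and the assertion that some $f(n_0)$ is prime \emph{is} the Bouniakowsky Hypothesis. No GCIC-style argument reverses the arrow. A smaller issue in the same step: for a positive polynomial with negative coefficients the digits of $f(n_0)$ in base $n_0$ are not the coefficients $a_i$ themselves but the constant and linear digits of the base-$x$ representation (Theorem~\ref{basex}), and the correct threshold is the minimum base $\mb(f)$, which satisfies $H(f)\leq \mb(f)\leq H(f)+1$, rather than $\max_i|a_i|$ alone. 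Neither of these affects your final, and accurate, conclusion that the approach yields a restatement rather than a resolution.
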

As  Ribenboim~\cite{Ri} points out, (BH) is equivalent to the following statement:
\textit{Let $f(x)\in\Z[x]$ be a proper irreducible polynomial with a positive leading coefficient. Then $f(n)$ takes a prime value for some $n\in\N$.} We shall give yet another equivalent statement to (BH) in Theorem~\ref{BHequal}. To date, the only family of polynomials   for which the Bouniakowsky Hypothesis is known to be true is the set of linear primitive polynomials in Dirichlet's Theorem. In fact, not even one proper irreducible polynomial of degree at least two has been shown to have infinitely many prime values. The case   $x^2+1$ is  one of ``Landau's four  problems,''  which were  mentioned  by Edmund Landau at the 1912 International Congress of Mathematicians as ``unattackable at the present state of science''~\cite{Pi}. 

The GCIC construction necessarily gives rise to proper irreducible polynomials:
\begin{proposition} \label{gcicprop}
	Let $f(x)\in\Z[x]$ be a non-constant irreducible polynomial obtained from some base-$b$ expansion of a prime $p$ as described in Theorem~\ref{gcic}. Then $f(x)$ is proper.	
\end{proposition}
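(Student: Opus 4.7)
The plan is to prove this by contradiction. Suppose $f$ is improper, so there exists a prime $q$ dividing $f(n)$ for every $n \in \Z$. Since $f(b) = p$ by the construction of Theorem~\ref{gcic}, we get $q \mid p$, hence $q = p$. The task then reduces to exhibiting a single integer $n_0$ at which $p \nmid f(n_0)$.

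First I would evaluate at $n = 0$, which yields $p \mid a_0$. Two observations then constrain $a_0$ tightly. Being a base-$b$ digit, $a_0$ lies in $\{0,1,\ldots,b-1\}$. Simultaneously, since $f$ is non-constant we have $\deg f = n \geq 1$ and $a_n \geq 1$, so, using the non-negativity of the remaining digits, $p = f(b) \geq a_n b^n \geq b$. Combining these gives $a_0 < b \leq p$, so the divisibility $p \mid a_0$ can hold only if $a_0 = 0$.

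Once $a_0 = 0$, I would factor $f(x) = x \cdot g(x)$ in $\Z[x]$. Because $f$ is irreducible and $x$ is not a unit, $g$ must be a unit $\pm 1$; non-negativity of the coefficients forces $g = 1$, so $f(x) = x$ and $p = f(b) = b$. But then $f(1) = 1$, contradicting $p \mid f(1)$. I expect the only real subtlety to lie in this $a_0 = 0$ branch, where irreducibility of $f$ and non-negativity of the base-$b$ digits must be invoked together: first to eliminate any non-trivial factorization $f = x \cdot g$, and then to verify that the lone surviving polynomial $f(x) = x$ is in fact proper (since $f(1) = 1$) rather than a genuine counterexample to the proposition.
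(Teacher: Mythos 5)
Your argument is correct, but it takes a genuinely different route from the paper's. Both proofs open identically: any common prime divisor of the values of $f$ divides $f(b)=p$, so $p$ is the only candidate. The paper then finishes in one stroke by choosing the witness $n_0=b-1$: since $f$ is non-constant with nonnegative coefficients, it is strictly increasing on $[1,\infty)$ and has no zeros in $\N$, so $0<f(b-1)<f(b)=p$ and $p\nmid f(b-1)$. You instead take $n_0=0$, use the digit bound $a_0\leq b-1<b\leq p$ (your estimate $p=f(b)\geq a_nb^n\geq b$ is right) to force $a_0=0$, and then must dispose of a residual case by invoking irreducibility to reduce to $f(x)=x$, which you check at $x=1$; that case is real, since $f(x)=x$ does arise whenever $b=p$. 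The trade-off is that your version costs a case split and an appeal to the irreducibility hypothesis, which the paper's proof never uses. You could in fact have avoided irreducibility too: $a_0=0$ gives $b\mid f(b)=p$, hence $b=p$, and then $a_np^{n-1}+\cdots+a_1=1$ with nonnegative digits forces $f(x)=x$ directly. The paper's single evaluation at $b-1$ is simply the more economical choice of witness.
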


\begin{proof}
	If $f(x)=\sum a_ix^i$ with $a_i\geq 0$, then $f'(x)>0$ for all $x\in\N $, and $f$ is strictly increasing for $x\in[1,\infty)$. Moreover, $f$ has no zeros in $\N$. 	Let $b\geq 2$ be the base in which $p$ is expressed. Then $0<f(b-1)<f(b)=p$, and $p$ --the only candidate for a common prime divisor-- cannot divide all values of $f(x)$ at integers.	
\end{proof}

\begin{definition} Let $f(x)\in\Z[x]$.   	If $f(x)$ has positive values for $x\geq k$ for some $k\in\Z$, that is, if it has a positive leading coefficient, then  we will call $f$ {\em positive}, and write $f>0$. A positive, proper, and irreducible polynomial  will be called a PPI-polynomial for short. These polynomials are non-constant by definition of ``proper.''
\end{definition}

Hence, a  nonzero polynomial with nonnegative integer coefficients, such as a GCIC polynomial, is  positive.  Other examples of  positive polynomials include the  \textit{cyclotomic polynomials} $\Phi_k(x)$ for $k\geq 1$. With this background, and after defining base-$x$ representations of polynomials, we will attach an easily-computable infinite family $\cF_p$ that is partly made up of PPI-polynomials, including the ones arising from the GCIC construction, to each rational prime $p$. The defining property of the family will be that each member takes the value $p$ at a sufficiently large value of $x$. It will then be a simple matter to show that this correspondence is 1-1.


\subsection{A case for a base.}


The GCIC construction is an example of a broader phenomenon: it is a well-known trick that a non-constant polynomial with nonnegative integer coefficients can be reconstructed from only two of its values, $f(c)=b$ and $f(f(c))=f(b)$ (as long as $b\geq 2$),  for some positive integer $c$. For example, suppose that we want to recover the polynomial 
$f(x)=3x^2+5x+7
$ 
from its values $f(1)=15$ and $f(15)=757  $. We write $757$ in base $15$, and use the digits as coefficients for the polynomial:
\[ 757=[ 357  ]_{15}\Longrightarrow f(x)=3x^2+5x+7.
\]
The reason why we can do this is   that the given polynomial provides a {\em model} for the  representation of any positive integer in base $b>\max\{3,5,7\}$. Since we are not supposed to know the coefficients, the  value $b=f(1)$, which is the sum of the positive coefficients, or else the values $f(2),f(3),\dots$, which are even larger, will satisfy this condition. Let us go even further. What if we have a positive polynomial with some  negative coefficients? 
\begin{example} Consider the  positive polynomial 
	\bea f(x)
	&=&2-x-x^3+2x^4-x^5+x^6-x^7+x^8-x^9+x^{10}-x^{11}+x^{12}.
	\eea
	We will soon show that any positive polynomial  has a unique representation in ``base $x$'' with constant and linear ``digits'' that will work for any sufficiently large base $b$. Indeed, when we write our particular polynomial in the form
	\bea f(x)&=& 2+(x-1)x+(x-1)x^2+(x-2)x^3 +x^4+(x-1)x^5 +(x-1)x^7\\ &&+(x-1)x^9
	+(x-1)x^{11}\\
	&=& [ (x-1)(0)(x-1)(0)(x-1)(0)(x-1)(1)(x-2)(x-1)(x-1)(2) ]_x,
	\eea
	 we find that
	\[ f(3)=[202020211222]_3,\;\; f(4)=[303030312332]_4,\;\; f(5)=[404040413442]_5.
	\]
	While some digits are repeated as is, others are always one less than or two less than the base, exactly as indicated by the ``coefficients'' of the $x^i$. 
	However, since $2$ is a coefficient itself, it is too small to serve as a base, and  we do not expect $ f(2)=[101010111000]_2  
	$ to conform to this shape.
	
\end{example}
These numerical experiments then suggest that a base-$x$ representation of a    positive polynomial can be obtained simply by writing two  values $f(b_1)$ and $f(b_2)$ in bases $b_1,b_2\geq 2$ respectively, where the integers $b_i$ are  sufficiently greater than the largest absolute value of the coefficients of $f$ (see Theorem~\ref{factoralg} for an application of this idea to factorization of polynomials, and the proof of Theorem~\ref{basex} where we use it establish uniqueness). However, this is hardly necessary in practice. In the next section, we will  describe how to compute the base-$x$ representation of any  positive polynomial algorithmically, using countably many  digits that form a chain with minimum and maximum elements, and   show that $\Z[x]$ naturally becomes an ordered domain. This representation arguably provides the most compelling analogy between $\Z[x]$ and $\Z$. 


\section{Representation in base $x$.}


\subsection{Existence and uniqueness.}\label{exun}


The example above  prompts us to give the following definitions:
 Let $f(x)$ be a positive polynomial. 
	We will call the nonnegative integer coefficients $a_i$ the	{\em constant digits} and the linear coefficients of the form $(x-a_i)$ the {\em linear digits} in a base-$x$ representation of $f(x)$.  These infinitely many digits form our   {\em polynomial alphabet}.

\begin{theorem}[Base-$x$ representation] \label{basex}
	Let $f(x)\in\Z[x]$ be a      positive polynomial. Then there exists a unique least positive integer $a$  and a unique	{\em base-$x$} representation of $f(x)$  as follows:
	\[ f(x)=b_0(x)\, 1+b_1(x)\, x+b_2(x)\, x^2+\cdots +b_m(x)\, x^m,
	\]
	with 	
	\[ b_i(x)\in\{ 0,1,\dots,a-1\}\cup\{ x-1,\dots,x-a\},\;\;\; b_m(x)\neq 0 .\]
	In particular,  the unique representation of   $f(b)\in\N$ in base $b$ for all $b\geq  a$ is given by
	\[ f(b)=[b_m(b) \cdots b_1(b) \, b_0(b) ]_{b}.
	\]
\end{theorem}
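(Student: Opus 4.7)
The plan is to prove existence through a concrete ``extract-and-divide'' algorithm that peels off one base-$x$ digit at a time, and to prove uniqueness by evaluating at a sufficiently large integer $b$ and invoking uniqueness of ordinary base-$b$ expansions; the final base-$b$ assertion then comes for free.

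For existence, set $g_0(x):=f(x)$ and, given $g_i$ with constant term $c_i$, define
$$ b_i(x) \; := \; \begin{cases} c_i & \text{if } c_i \geq 0, \\ x - |c_i| & \text{if } c_i < 0, \end{cases} \qquad g_{i+1}(x) := \frac{g_i(x) - b_i(x)}{x}. $$
By construction the constant term of $g_i - b_i$ vanishes in either case, so $g_{i+1}\in\Z[x]$, and $\deg g_{i+1}=\deg g_i - 1$ whenever $\deg g_i\geq 1$. A short case check (on $\deg g_i\geq 2$ and $\deg g_i=1$) shows that the leading coefficient of $g_i$ remains the (positive) leading coefficient of $f$ as long as $\deg g_i\geq 1$, and once $\deg g_i=0$ the resulting nonnegative constant is absorbed in one further step. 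Thus the algorithm halts at some index $m+1$ with $g_{m+1}=0$, producing $f(x)=\sum_{i=0}^m b_i(x)\, x^i$ with $b_m\neq 0$. Taking $a$ to be the least positive integer making every $b_i$ lie in $\{0,\dots,a-1\}\cup\{x-1,\dots,x-a\}$ completes existence.

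For uniqueness, suppose $f=\sum c_i(x)\,x^i = \sum d_i(x)\,x^i$ are two admissible representations with alphabet bounds $a_c,a_d$, and pick any integer $b \geq 2\max(a_c,a_d)$. Each admissible digit $\delta$ evaluates at $b$ to either $\delta(b)\in\{0,\dots,a-1\}$ (if $\delta$ is constant) or $\delta(b)\in\{b-a,\dots,b-1\}$ (if $\delta=x-k$ is linear), and these two ranges are disjoint because $b\geq 2a$; moreover $\delta\mapsto \delta(b)$ is injective on each. Hence $f(b)=\sum c_i(b)\,b^i = \sum d_i(b)\,b^i$ are two genuine base-$b$ expansions of the same positive integer, so uniqueness of base-$b$ expansions forces $c_i(b)=d_i(b)$, and the injectivity of $\delta\mapsto\delta(b)$ on the alphabet yields $c_i\equiv d_i$ and $a_c=a_d$. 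The final assertion is now automatic: for any $b\geq a$ each $b_i(b)$ lies in $\{0,\dots,a-1\}\cup\{b-a,\dots,b-1\}\subseteq\{0,\dots,b-1\}$ with $b_m(b)\neq 0$, so $f(b)=\sum b_i(b)\,b^i=[b_m(b)\cdots b_1(b)\,b_0(b)]_b$.

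The main obstacle is the termination-with-positivity bookkeeping in the existence step. One must rule out the algorithm ever reaching a state $\deg g_i=0$ with $g_i<0$, since it would then produce $g_{i+1}=-1,\,g_{i+2}=-1,\dots$ ad infinitum. The safeguard is the invariant that $g_i$ has positive leading coefficient whenever it is nonzero. The delicate case is $\deg g_i=1$ with $c_i<0$: writing $g_i=c\,x+c_i$ one obtains $g_{i+1}=c-1$, and one must confirm $c\geq 1$ so that $g_{i+1}$ is a nonnegative constant rather than a negative one. Once this invariant is in place, the remaining verifications are routine accounting.
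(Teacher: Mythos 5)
Your proposal is correct and follows essentially the same route as the paper: digits are generated from the constant term upward (your divide-by-$x$ recursion is just a cleaner bookkeeping of the paper's ``replace a negative coefficient $a_i$ by $x-|a_i|$ and carry $-1$ to the next coefficient'' step), and uniqueness is obtained by evaluating at a sufficiently large integer $b$ and appealing to uniqueness of ordinary base-$b$ expansions. If anything, your treatment of termination (the invariant on the leading coefficient and the $\deg g_i=1$, $c_i<0$ case) is more explicit than the paper's.
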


\begin{proof}
	We start the construction with the constant term and work our way towards higher powers of $x$. At any  step, if $ a_i$ is a negative coefficient of $x^i$, then we replace  it by the linear digit $(x-|a_i|)$, and add the term $-x^{i+1}$ to balance it out. If the original coefficient of $x^{i+1}$ in $f$ is positive, then it will be reduced by one and will not affect the coefficient of $x^{i+2}$. However, if $ a_{i+1}\leq 0$, then the new ``coefficient'' of $x^{i+1}$ will be the linear digit $(x-|a_{i+1}|-1)$, and will cause a new term $-x^{i+2}$ to appear, etc. We can see that positive coefficients remain the same or are reduced by one, whereas  negative coefficients $ a_i $ are eventually replaced by $(x-|a_i|)$ or $(x-|a_{i+1}|-1)$. This rule also applies to the last (positive)  coefficient  of $f(x)$, and the   original highest power will now have a smaller positive coefficient or else vanish. The base-$b$ representations of $f(b)$ in $\Z$ follow for $b\geq a$.  
	Now suppose that there are two distinct base-$x$ representations of the same polynomial. Pick any $b>  \max\{ a_i+b_i\}$ in $\N$ so that the   nonnegative integers $a_i$ and $b_i$ that appear in the $i$th constant or linear digits  of the two forms  of the polynomial $f$ always result in  a pair of distinct values $\{ a_i,b-b_i\}$ and a pair of distinct values $\{ b_i,b-a_i\}$.  Then the positive integer $f(b)$ must have two distinct base-$b$ representations, as we shall see: let the $i$th digits be distinct in the two base-$x$ expressions for some fixed $i$. They cannot both be constant digits or both be linear digits, because these would immediately give us different  integers in base $b$ upon substitution of $b$. Due to the choice of $b$, the two values will be different even in the case of a pair of opposite-type $i$th digits.
\end{proof}

	Given any positive polynomial $f(x)=\sum a_ix^i\in\Z[x]$, we will call the unique least positive integer $a$ described in Theorem~\ref{basex}  the {\em minimum base} of $f$, and denote it by $a=\mb(f)$. 
	We will also define  $H(f)=\max |a_i|$ as the {\em height} of $f$. By construction, it is clear that
$ H(f)\leq \mb(f)\leq H(f)+1$.

\begin{example}
	We compute the base-$x$ representation of
	\bea f(x)&=&   -7+2x^2-x^3+x^4\\
	&=&(x-7)  1-x+2x^2-x^3+x^4\\
	&=& (x-7)  1+(x-1) x-x^2+2x^2-x^3+x^4\\
	&=&(x-7)  1+(x-1) x + x^2-x^3+x^4\\
	&=& (x-7)  1+(x-1) x + x^2+(x-1)x^3-x^4+x^4\\
	&=& (x-7)  1+(x-1) x + x^2+(x-1)x^3\\
	&=&[(x-1)(1)(x-1)(x-7)]_x.
	\eea  We find that the minimum base of $f$ is $\mb(f)=7=H(f)$. By contrast, the polynomial $g(x)=  7+2x^2+x^3+x^4$ has minimum base $\mb(g)=8=H(g)+1$. 
\end{example}


 \subsection{A Natural Linear Ordering on  $\boldsymbol{\Z[x]}$.}\label{order}
 
 
The following statement is indisputable, yet somewhat surreal:
\begin{theorem} [Linear Ordering] The linear ordering
	\[ (0)< (1)< \cdots <(n)<\cdots  < (x-n) <\cdots < (x-2) <(x-1) 
	\]
	on the polynomial alphabet extends to a natural linear ordering on the positive polynomials and to one on $\Z[x]$ by symmetry: we have
	\bea  &&[(0)]_x\\
	&<& [(1)]_x <\cdots    <[(x-1)]_x <[(1)(0)]_x\\
	&<& [(1)(1)]_x<\cdots <[(1)(x-1)]_x<[(2)(0)]_x\\
	&\vdots& \\
	&<& [(x-2)(1)]_x<\cdots <[(x-2)(x-1)]_x <[(x-1)(0)]_x  \\
	&<& [(x-1)(1)]_x<\cdots <[(x-1)(x-1)]_x <[(1)(0)(0)]_x \\
	&<& [(1)(0)(1)]_x          <\cdots .
	\eea
	Every polynomial in $\Z[x]$ has a successor and a predecessor   obtained by adding or subtracting $[(1)]_x$ respectively.
\end{theorem}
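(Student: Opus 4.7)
The plan is to identify the claimed order with the standard ``leading-coefficient-of-the-difference'' order on $\Z[x]$, and then to read off the displayed enumeration by translating between base-$x$ and base-$b$ for sufficiently large $b \in \N$.

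First I would declare $f < g$ in $\Z[x]$ iff $g - f$ has a positive leading coefficient, and check that this is a linear order, compatible with addition, equivalent to $f(b) < g(b)$ for all sufficiently large $b$ (because the leading term dominates at large argument). Restricting to the alphabet produces the displayed chain: $(m) - (n) = m - n$ orders the constants, $(x - m) - (n) = x - (m + n)$ has positive leading coefficient so every linear digit beats every constant, and $(x - m) - (x - n) = n - m$ shows $(x - m) > (x - n)$ exactly when $m < n$.

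Next, for positive polynomials $f = [b_m \cdots b_0]_x$ and $g = [c_n \cdots c_0]_x$, I would fix an integer $b$ exceeding $\mb(f)$ and $\mb(g)$ and large enough that the digit values $b_i(b), c_j(b)$ are sorted in the same order as the alphabet digits themselves. By Theorem~\ref{basex}, the strings $b_m(b)\cdots b_0(b)$ and $c_n(b)\cdots c_0(b)$ are then the standard base-$b$ representations of the positive integers $f(b)$ and $g(b)$, so positional comparison of integers in base $b$ (longer string wins; otherwise lexicographic from the most significant digit) transfers verbatim to the corresponding comparison of base-$x$ strings via the alphabet order. The displayed enumeration is thus exactly the list $[0]_x, [1]_x, [2]_x, \ldots$ decoded at any sufficiently large $b$, and extension to all of $\Z[x]$ is automatic because $f < g \iff g - f > 0$ is symmetric under negation.

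For the successor claim, I would verify that $[(1)]_x = 1$ is the minimum positive polynomial: any positive constant is $\geq 1$, and any positive polynomial $p$ of positive degree satisfies $p - 1 > 0$ (same leading coefficient), whence $p > 1$. Hence $g > f \Rightarrow g - f \geq 1 \Rightarrow g \geq f + 1$, so $f + [(1)]_x$ is the immediate successor and symmetrically $f - [(1)]_x$ is the immediate predecessor. The main obstacle, and the source of the theorem's surreal flavor, is reconciling the non-well-ordered alphabet type $\omega + \omega^{\ast}$ with the fact that $\Z[x]$ under this order is globally isomorphic to $\Z$. The resolution is that base-$x$ carrying (for example $[(x-1)(x-1)]_x + 1 = [(1)(0)(0)]_x$, which also bumps $\mb$) inherits directly from ordinary base-$b$ carrying at large $b$ and collapses the apparent gaps between constant and linear digits into single successor steps.
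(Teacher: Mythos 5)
Your argument is correct, and it is essentially the argument the paper leaves implicit: the theorem is stated without proof (declared ``indisputable''), and only the surrounding remarks and Corollary~\ref{rules} identify the order with positivity of the difference and describe the length-then-lexicographic comparison of digit strings. You supply the missing verifications --- that positivity of the leading coefficient gives a linear order compatible with addition, that comparison of base-$x$ strings transfers to ordinary base-$b$ integer comparison at sufficiently large $b$, and that $[(1)]_x=1$ is the minimum positive polynomial so that $g>f$ forces $g\geq f+1$, which is exactly what the successor/predecessor claim requires.

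One genuine error in your closing remark, however: $\Z[x]$ with this order is \emph{not} order-isomorphic to $\Z$, and the displayed enumeration is not literally the list obtained by iterating the successor map starting from $[(0)]_x$. The order is discrete (every element has an immediate successor and predecessor), but it is non-Archimedean: there are infinitely many elements strictly between $1$ and $x$, namely $2,3,4,\dots$ and $\dots,x-3,x-2,x-1$, whereas any two elements of $\Z$ have only finitely many elements between them; iterating the successor from $0$ only ever produces the nonnegative constants and never reaches $x-1$ or $x$. The ellipses in the theorem's display conceal blocks of order type $\omega+\omega^{\ast}$, which is precisely why the paper notes that the positive polynomials are not well-ordered and cites $\F(x)$ as the standard non-Archimedean example. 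This slip does not damage your proof of the statement as written --- the displayed inequalities and the successor property are all that is claimed, and you establish both --- but the ``isomorphic to $\Z$'' picture should be dropped.
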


Note that the largest digit plus 1 is equal  to the base, $x$, written $[(1)(0)]_x$, and that the subset of positive elements of $\Z[x]$ is not well-ordered. If $m>n>0$, then a positive polynomial with $m$ digits is strictly greater than one with $n$ digits. If two positive polynomials  have the same number of digits, then the lexicographic order determines which one is larger.

\begin{corollary} \label{rules}
	The ordering on $\Z[x]$ makes it  an ordered integral domain. In particular,  $f>g$ if and only if $f-g$ is  positive in the sense that we have defined. If $f>g$, then the following properties hold:
	\[ (i)\; f+h>g+h\;\;\mbox{for all $h$, and}\;\;(ii)\;  fh>gh\;\;\mbox{for all $h>0$}.
	\]
\end{corollary}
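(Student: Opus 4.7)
The plan is first to establish the key characterization: $f > g$ in the base-$x$ ordering if and only if $f - g$ is positive in the leading-coefficient sense. Once this bridge is built, properties (i) and (ii) follow by routine algebra, and the ordered-domain conclusion follows from the fact that $\Z[x]$ is already known to be an integral domain.

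To build the bridge, I would reduce to the case of positive $f, g$ and argue via evaluation at a sufficiently large base. By Theorem~\ref{basex}, for any $b \geq \max(\mb(f), \mb(g))$ the polynomial digits $b_i(b)$ and $b'_i(b)$ are legitimate base-$b$ digits of the integers $f(b)$ and $g(b)$, and in fact when $b$ is taken large enough (say $b \geq 2\max(\mb(f),\mb(g))$, forcing $a-1 < b-a$) the digit chain $(0) < (1) < \cdots < (x-2) < (x-1)$ is order-preservingly embedded into $\{0,1,\ldots,b-1\}$ under $x \mapsto b$. After zero-padding the shorter representation, lex order on the base-$x$ digits therefore coincides with the usual base-$b$ numerical comparison of $f(b)$ versus $g(b)$; the ``more digits is larger'' rule falls out from this since padded leading zeros are smaller than any nonzero digit. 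Hence $f > g$ in base-$x$ order is equivalent to $f(b) > g(b)$ for all sufficiently large $b \in \N$, which in turn is equivalent to $f - g$ having a positive leading coefficient (every nonzero polynomial eventually takes the sign of its leading coefficient). This handles positive $f, g$; the extension to arbitrary $f, g \in \Z[x]$ through the symmetric placement of negatives ($-p < 0 < p$ for positive $p$) is a straightforward case-check over the possible signs.

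With the characterization $f > g \iff f - g > 0$ in hand, (i) is immediate because $(f+h) - (g+h) = f - g$, so the left side is positive precisely when the right side is. For (ii), the product $(f-g)h$ of two polynomials with positive leading coefficients has leading coefficient equal to the product of theirs, which is again positive, so $(f-g)h > 0$, i.e., $fh > gh$. Trichotomy (every polynomial is $>0$, $=0$, or $<0$) is built into the definition of ``positive,'' and these together with $\Z[x]$ being an integral domain yield the ordered integral domain conclusion.

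The main obstacle is the characterization step: reconciling the intricate lex-by-digit ordering, where digits can be of two different types (constant or linear) and where the number of base-$x$ positions can differ from the polynomial's actual degree, with the clean ``positive leading coefficient'' notion. The evaluation-at-large-$b$ device is the crucial move, because it converts a case analysis over digit types and positions into the familiar numerical comparison of integers in a fixed base, which is already known to agree with lexicographic comparison of the digit strings.
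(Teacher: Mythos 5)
Your proposal is correct, and it actually supplies an argument where the paper gives none: Corollary~\ref{rules} is stated without proof, with the preceding Linear Ordering theorem likewise asserted as ``indisputable.'' Your key device --- evaluating at a base $b \geq 2\max(\mb(f),\mb(g))$ so that the digit chain embeds order-preservingly into $\{0,\dots,b-1\}$ and lexicographic comparison of base-$x$ digit strings becomes ordinary comparison of the integers $f(b)$ and $g(b)$ --- is exactly the device the paper itself uses to prove uniqueness in Theorem~\ref{basex}, so you are completing the paper's intended line of reasoning rather than taking a different route. The reduction of (i) and (ii) to the characterization $f>g \iff f-g>0$, and the sign case-check for non-positive polynomials, are all sound; the only point worth making explicit is that the number of base-$x$ digits of a positive polynomial need not be $\deg f + 1$ (e.g.\ $x^2 - x = [(x-1)(0)]_x$ has two digits), but this does no harm since your comparison is between digit \emph{strings} after padding, which still match the base-$b$ expansions of $f(b)$ and $g(b)$ digit for digit.
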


	The field of fractions  of $\Z[x]$, and  its subring $\Q[x]$, are also ordered according to the rule
	\[ \frac{f}{g}>0\Longleftrightarrow \mbox{$f$ and $g$ are both positive or both negative.}
	\] 
	In fact, the set of rational functions in $x$ over any ordered field $\F$ is an ordered field itself, whose order relation is compatible  with the one on $\,\F$. However, this fact is more commonly used in   analysis as an example of a non-Archimedean field: no natural number $n$ satisfies $n\cdot 1>x$ (e.g., see Example~3.2.6 and Exercise 3.16 of Lay~\cite{Lay}). In algebra and number theory, on the other hand, we can make use of this ordering to establish a more natural version of the Division Algorithm, as we shall see below.

\begin{theorem}[Division Algorithm  for a  Monic Divisor]\label{divalg}
	Let $f(x)$ be any polynomial and $g(x)$ be a monic polynomial in $\Z[x]$. Then there exist    unique   polynomials $q(x),r(x)\in \Z[x]$ such that
	\[ f(x)=q(x)g(x)+r(x),\;\;\; 0\leq r(x)< g(x).
	\]
\end{theorem}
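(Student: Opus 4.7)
The plan is to reduce to classical Euclidean division, which is available because $g$ is monic, and then shift the remainder into the new interval $[0,g)$. First I would invoke standard long division in $\Z[x]$ to obtain $q_0,r_0\in\Z[x]$ with $f=q_0 g+r_0$ and either $r_0=0$ or $\deg r_0<m:=\deg g$; integrality is preserved at every step because the leading coefficient of $g$ equals $1$.

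The key lemma I would prove is that any $h\in\Z[x]$ with $\deg h<m$ satisfies $-g<h<g$ in the base-$x$ ordering. By Corollary~\ref{rules}, $a>b$ is equivalent to $a-b$ having a positive leading coefficient; both $g-h$ and $g+h$ then have degree exactly $m$ with leading coefficient $1$, so both are positive, giving the bound. Applied to $r_0$, this places $r_0\in(-g,g)$. Existence then follows by setting $(q,r)=(q_0,r_0)$ when $r_0\geq 0$, and $(q,r)=(q_0-1,\,r_0+g)$ when $r_0<0$; in the latter case $r_0\in(-g,0)$ forces $r=r_0+g\in(0,g)$. Either way $(q,r)\in\Z[x]\times[0,g)$ satisfies $f=qg+r$.

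For uniqueness, suppose $f=qg+r=q'g+r'$ with $r,r'\in[0,g)$. Then $(q-q')g=r'-r\in(-g,g)$. If $q\neq q'$, then $|q-q'|\geq 1$ because $1$ is the smallest positive element of $\Z[x]$ by the Linear Ordering theorem, and Corollary~\ref{rules} gives $|q-q'|\cdot g\geq 1\cdot g=g$, contradicting $|(q-q')g|<g$. Hence $q=q'$, and then $r=r'$ follows.

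The subtle point is that the $r_0$ returned by classical polynomial division, though unique among remainders with $\deg r_0<m$, is in general not the required remainder in $[0,g)$; the extra $\pm g$ correction reflects the fact that this theorem replaces the standard degree bound with the sign-definite ordering bound $r<g$, producing a genuine analogue of the integer division algorithm. Once the base-$x$ ordering is recognized as the lexicographic comparison of coefficients from the highest degree downward, neither existence nor uniqueness presents any real obstacle beyond this lemma.
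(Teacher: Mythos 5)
Your proposal is correct and follows essentially the same route as the paper: long division (valid over $\Z$ because $g$ is monic) followed by the correction $(q_0-1,\,r_0+g)$ when $r_0<0$, with the bound $-g<r_0<g$ coming from comparing leading coefficients. Your uniqueness argument is a slight streamlining of the paper's --- you invoke discreteness directly to get $|q-q'|\ge 1$ and hence $|q-q'|\,g\ge g$, whereas the paper first runs a degree count to show $q_1-q_2$ is a constant before reaching the same contradiction $a\,g\ge g$ --- but the underlying idea is identical.
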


\begin{proof} 
	Existence: Long division gives us the correct form  $f=qg+r$, but the second condition is replaced by  ``$r=0$ or $\deg r<\deg g$''. If $r\geq 0$, then we are done, as a larger degree means a larger polynomial when both are positive. However, if $r<0$, then $q(x)-1$ as the quotient and $g(x)+r(x)$ as the remainder will yield the desired result  by judicious applications of Corollary~\ref{rules}. In the latter case, the degree of the remainder will be equal to the degree of $g$. 
	
	Uniqueness: Suppose that we have 
	$ f(x)=q_1(x)g(x)+r_1(x)$ and $f(x)=q_2(x)g(x)+r_2(x)$, 	with $0\leq r_1(x),r_2(x)< g(x)	$. Then 
	$  [q_1(x)-q_2(x)]g(x)=r_2(x)-r_1(x) 
	$. 
	If both sides are zero, then we are done. Suppose not. Thus, $\deg (r_2-r_1)\geq \deg g$ by the additivity of degrees in polynomial multiplication over an integral domain. However,  we also know that $\deg (r_2-r_1)\leq \deg g$ (neither remainder can have a degree larger than that of $g$), and hence, we must have $\deg (r_2-r_1)= \deg g$. This leads us to $\deg (q_1-q_2)=0$. Without loss of generality, say $q_1(x)=q_2(x)+a$ for some $a\in\N$. Hence, we conclude that \[ a \, g(x)=r_2(x)-r_1(x)\Longrightarrow r_2(x)=r_1(x)+a\, g(x)\geq 0+1\, g(x)=g(x) 
	\]
	by Corollary~\ref{rules},	a contradiction.
	
\end{proof}

Arithmetic operations on positive polynomials written in base $x$, including digital division by a monic divisor, are covered in  Appendix~\ref{app} (see the last example). Unfortunately, we cannot extend the statement and the proof to any polynomial ring $\F[x]$ where $\F$ is a generic ordered domain or field, because not every ordered domain is {\em discrete}: there may be positive elements between 0 and 1.


\subsection{Polynomial factorization via base patterns.}\label{proof}


Factorization of integers into prime factors and of polynomials in $\Z[x]$ into irreducible factors in finitely many steps is theoretically possible. The reader can consult Knuth~(\cite{Kn}, pp.~420-436) for a compendium of polynomial factorization algorithms  and their history. Our method of factorization of positive polynomials is a variant of \textit{Kronecker's method}: according to Knuth (p.~431), Isaac Newton had described a method to factorize polynomials in $\Z[x]$ in {\em Arithmetica Universalis} (1707). Newton's method was then generalized into a general, finite algorithm by the astronomer  Friedrich von Schubert in 1793 (see the account of the mathematics historian M.\ Cantor~\cite{Cr}, pp.~136-137), and Kronecker rediscovered the method independently about 90 years later~\cite{Kr}. The complete description of Kronecker's algorithm can be found in van der Waerden~\cite{vdW}, pp.~97-98.  The algorithm relies on the facts that (1) if $g(x)$ divides $f(x)$, then for all $a\in\Z$, $g(a)$ either  is zero or  divides $f(a)$; (2) knowing sufficiently many values of a polynomial of bounded degree, we can reconstruct it using Lagrange's or Newton's interpolation formulas; (3) hence, there are finitely many candidates for polynomial factors; and 
(4) the division algorithm for rational numbers can be applied to see whether a suspected factor is an actual factor.  This algorithm is too costly for computer applications and needs to be enhanced or replaced by other techniques such as looking for a rational root first, or reduction modulo a few primes. Similarly, the procedure that we are going to describe is computationally expensive, but fun to implement by hand. A slight improvement over Kronecker's method is that only two values of the polynomial are required, so that fewer cases need  to be considered. Moreover, interpolation formulas are replaced by base-$x$ representations.

 We recall that for $f(x)=\sum a_i x^i\in\C[x]$, the {\em $\ell_2$-norm} of $f$ is defined as
\[ ||f||=\left( \sum |a_i|^2   \right)^{1/2}.
\]
We will also use the traditional divisibility notation,  $g |f$, for both polynomials and integers.
 If $f(x)\in\Z[x]$ is positive, then it has finitely many positive divisors $g(x)\in\Z[x]$, and we define the {\em minimum factor base} of $f$ to be
	\[ \mfb(f)=\max\{ \mb(g): g|f,\; g>0   \}   .
	\]
	Note that $\mfb(f)=\mb(f)$ when $f$ is irreducible. Although the exact value of $\mfb(f)$ is obviously unknown in a factorization problem, upper bounds for the height of any divisor entirely in terms of $f$ are known. For example, a corollary of Theorem~4 in Mignotte~\cite{Mi} (namely, that $||g||\leq 2^{\deg g}||f||$) implies the following  bound for $g$ dividing $f$:
\[ H(g)\leq ||g||\leq 2^{\deg g}||f|| \leq 2^{\deg f}||f||.
\]
We will not be concerned with the best known upper bound for heights of divisors, but make a note of the fact that they do exist and depend entirely on $f$ itself. This, in turn, provides us with an upper bound on the minimal bases of all positive divisors of $f$. In what follows, let $ \overline{\mbox{MFB}} (f)$ denote such an absolute upper bound for $\mfb(f)$, which is not unique.

\begin{theorem}\label{factoralg} It is possible to factor a positive polynomial $f(x)\in\Z[x]$  into irreducible polynomials in $\Z[x]$ in finitely many steps by factorizing two values
$f(b_1)$ and $f(b_2)$ 	of $f$ into primes for distinct natural numbers $b_i>  \overline{\mbox{MFB}} (f)+1$.		
\end{theorem}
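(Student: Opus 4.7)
The plan is to convert polynomial factorization of $f$ into two \emph{integer} factorizations, then exploit the uniqueness half of the base-$x$ representation (Theorem~\ref{basex}) to cross-match the base-$b_1$ and base-$b_2$ expansions of integer divisors and recover all polynomial candidates for divisors of $f$.

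First I would reduce to \emph{positive} divisors: since $f>0$, any factorization $f=gh$ in $\Z[x]$ can be normalized so that both factors are positive, by flipping the signs of both when necessary. For any positive divisor $g$ of $f$, two facts drive the algorithm. Evaluating the polynomial identity $f=g\cdot(f/g)$ at $x=b_i$ gives $g(b_i)\mid f(b_i)$ in $\Z$, so $g(b_i)$ lies in the finite set of positive divisors of $f(b_i)$. Moreover, because $\mb(g)\le\mfb(f)\le\overline{\mbox{MFB}}(f)<b_i-1$, Theorem~\ref{basex} asserts that the base-$b_i$ expansion of the integer $g(b_i)$ is obtained digit-by-digit by substituting $b_i$ for $x$ in the base-$x$ digits of $g$.

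The algorithm then runs as follows. Factor $f(b_1)$ and $f(b_2)$ into primes (a finite task) and enumerate the finitely many pairs $(d_1,d_2)$ of positive divisors. For each pair, write the base-$b_1$ expansion $d_1=[c_m^{(1)}\cdots c_0^{(1)}]_{b_1}$ and the base-$b_2$ expansion $d_2=[c_{m'}^{(2)}\cdots c_0^{(2)}]_{b_2}$, and call the pair \emph{admissible} if $m=m'$ and at each position $j$ either $c_j^{(1)}=c_j^{(2)}$ (interpreted as a constant digit of that value) or $b_1-c_j^{(1)}=b_2-c_j^{(2)}\ge 1$ (interpreted as a linear digit $x-k$ for $k$ this common difference). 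Because $b_1\neq b_2$, the two options at any position are mutually exclusive, so each admissible pair determines a unique candidate polynomial $g(x)\in\Z[x]$. For each such candidate I would run a polynomial division of $f$ by $g$ and keep $g$ precisely when the remainder is zero. Recursing on each non-trivial factor found, the degree strictly drops at every split, so the procedure terminates with a complete factorization of $f$ into irreducibles in $\Z[x]$.

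The main obstacle I expect is verifying \emph{completeness} of the admissibility criterion: every positive divisor of $f$ must arise from some admissible pair. This reduces to the bound $\mb(g)<b_i-1$: a constant base-$x$ digit $k\le\mb(g)-1$ evaluates to the same integer $k$ in both bases, while a linear digit $x-k$ with $1\le k\le\mb(g)$ evaluates to $b_1-k$ and $b_2-k$ respectively, consistently satisfying the linear clause. Uniqueness of the candidate, in turn, follows because simultaneous satisfaction of both clauses at a position would force $b_1=b_2$. One minor technical point is that Theorem~\ref{divalg} applies only when the divisor is monic; for a non-monic candidate $g$ I would work in $\Q[x]$ and check whether the quotient has integer coefficients, or equivalently use classical pseudo-division over $\Z[x]$.
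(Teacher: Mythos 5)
Your proposal is correct and follows essentially the same route as the paper: evaluate $f$ at two integers exceeding $\overline{\mbox{MFB}}(f)+1$, factor those values, cross-match the base-$b_1$ and base-$b_2$ digit strings of divisor pairs to reconstruct candidate base-$x$ patterns, verify by trial division, and recurse. Your explicit ``admissible pair'' criterion and the completeness argument via $\mb(g)<b_i-1$ are just a more formalized version of the paper's ``uncover a common base pattern'' step and its closing remark on why non-constant divisors cannot hide by taking the value $1$ at both evaluation points.
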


\begin{proof} 
	Given a positive polynomial $f(x)$, we factor out its content first.  Assume that $f$ is primitive. 
	Let $f(x)=g(x)h(x)$ be an unspecified  factorization into positive and necessarily primitive  polynomials in $\Z[x]$.  Let $a=\overline{\mbox{MFB}} (f) $, so that all integers of the form $f(b)$ as well as their unknown factors $g(b)$ and $h(b)$ for $b\geq a$ have unique representations in base $b$ that match the base-$x$ representations of the corresponding polynomials.  We first  factor $f(a+2)$ and $f(a+3)$ into primes and compute all   positive  divisors of these two numbers. By comparing all possible divisor pairs of $f(a+2)$ and $f(a+3)$, we can uncover a common base pattern that is repeated as in the last example of the Introduction (Section~1). Any observed base  pattern can be converted into a positive polynomial and be verified or ruled out as a divisor. We can then start testing the quotients for divisors in turn. 
	
	We remark that (1) two substitutions are necessary to decide whether a digit in the given base (substituted value)  corresponds to a fixed or variable digit in base $x$, and (2) it is possible to choose even larger numbers $b$ such that $f(b)$ has fewer factors. However, the latter course requires additional computations, which may be undesirable. 
	
	Why can't we just use $\overline{\mbox{MFB}} (f)$ as the least possible value for $b$, instead of $\overline{\mbox{MFB}} (f)+2$? The only thing that could potentially go wrong in the first paragraph's scenario is that there may be additional non-constant polynomial factors of $f(x)$ that attain the value $+1$ at $b_1$ and $b_2$; however, if a divisor $g(x)$ is not constant, then it has at least one term of the form $ux^j$ ($u,j\geq 1$) or $(x-u)x^j$ ($u\geq 1$, $j\geq 0$)   in its base-$x$ representation, making $\mfb(f)\geq \mb(g)\geq u+1$ or $u$ respectively. Therefore, $g(x)$ cannot attain the value 1 when evaluated at an $x=b_i\geq \overline{\mbox{MFB}} (f)+2\geq \mfb(f)+2\geq u+2$, because in this case at least one of its  terms in the base-$x$ representation ($ux^j$ or $(x-u)x^j$)  will achieve a value $\geq 2$, 
	and the possibility of an invisible non-constant factor is avoided. 
\end{proof}

\begin{example} Let us factor a polynomial given in an exercise of van der Waerden~\cite{vdW}: $f(x)=x^5+x^4+x^2+x+2$. We compute 
	\[ ||f||\approx 2.828\;\;\mbox{and}\;\; H(g)\leq 2^5||f||\approx 90.496\;\;\mbox{for any $g|f$.} 
	\]
Hence, we may take	$\overline{\mbox{MFB}} (f)=90+1=91$, and evaluate
\[ f(93)=7\, 031\, 697\, 638=2^1\cdot 7^1\cdot 1\, 249^1\cdot 402\, 133^1 
\]
and
\[ f(94)=7\, 417\, 124\, 052=2^2\cdot 3^1\cdot 13^2\cdot 229^1\cdot 15\, 971^1.
\]	
By  trial and error, we find that the factors $8\, 743=7\cdot 1\, 249$ and 	$8\, 931=3\cdot 13\cdot 229$ of $f(93)$ and $f(94)$ respectively have the base representations
\[ 8\, 743=93^2+93+1=[111]_{93}\;\;\mbox{and}\;\; 8\, 931=94^2+94+1=[111]_{94}.
\]
That is, $g(x)=x^2+x+1$ is a plausible factor, which is  irreducible. We divide $f(x)$ by $g(x)$ to find the exact quotient $h(x)=x^3-x+2$. By the rational root theorem, $h(x)$ is irreducible as well, and $f(x)=g(x)h(x)$ is the full factorization of $f(x)$ into irreducibles.	

Now, if we had tried the factors 
\[ 804\, 266=2\cdot 402\, 133=92\cdot 93^2+92\cdot 93+2=[(93-1)(93-1)(2)]_{93}
\]	
	and
	\[ 830\, 492=2^2\cdot 13\cdot 15\, 971=93\cdot 94^2+93\cdot 94+2=[(94-1)(94-1)(2)]_{94}
	\]
of $f(93)$ and 	$f(94)$ respectively, we would have decided that
\[ [(x-1)(x-1)(2)]_x=(x-1)x^2+(x-1)x+2=x^3-x+2
\]
might be a factor, and we would have been correct.

\end{example}


Every polynomial factorization algorithm serves as an irreducibility test as well.  
The following corollary is a generalization of GCIC, where all coefficients are not necessarily positive.

\begin{corollary} \label{irredtest}
	Let $f(x)$ be a non-constant positive polynomial and $a= \overline{\mbox{MFB}}(f)$. If $f(b)=p$, a prime, for some integer $b>a+1$, then $f(x)$ is proper and irreducible in $\Z[x]$.  	
\end{corollary}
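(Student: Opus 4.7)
The plan is to derive both conclusions from the key engine established in the proof of Theorem~\ref{factoralg}: if $g(x)$ is a non-constant positive divisor of $f(x)$ in $\Z[x]$, then $g(b)\geq 2$ for every integer $b\geq \overline{\mbox{MFB}}(f)+2$. The hypothesis $b>a+1$ gives exactly $b\geq a+2=\overline{\mbox{MFB}}(f)+2$, so the engine applies throughout, and both claims reduce to ruling out positive divisors that would be forced to take the value $1$ at $x=b$.

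I would handle \emph{properness} first. Suppose for contradiction that some prime $q$ divides $f(n)$ for every $n\in\Z$. Then $q\mid f(b)=p$, forcing $q=p$. The introduction already recorded (via divisibility of the mod-$p$ reduction of $f$ by $x^p-x$) that an improper polynomial whose values are all divisible by $q$ must satisfy $q\leq\deg f$; it therefore suffices to show $f(b)>\deg f$, contradicting $p\leq\deg f$. Writing $n=\deg f\geq 1$, $a_n\geq 1$, and $|a_i|\leq H(f)\leq a\leq b-2$, the geometric-series estimate
\[
f(b)\;\geq\;b^n-H(f)\,\frac{b^n-1}{b-1}\;\geq\;\frac{b^n+b-2}{b-1}\;>\;n
\]
closes it out for all $b\geq 2$ and $n\geq 1$ (a one-line verification, slightly different for $n=1$ versus $n\geq 2$).

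For \emph{irreducibility}, I would argue by contradiction: suppose $f=gh$ in $\Z[x]$ with neither $g$ nor $h$ a unit. Since $f$ has positive leading coefficient, after possibly replacing $(g,h)$ by $(-g,-h)$ I may arrange $g>0$ and $h>0$. If both factors are non-constant, the engine forces $g(b),h(b)\geq 2$, making $p=f(b)=g(b)h(b)$ composite, a contradiction. If one of them, say $g$, is the constant $c\geq 1$, then $c\mid p$, so either $c=1$ (a unit, excluded by assumption) or $c=p$. But $c=p$ would imply that $p$ divides every coefficient of $f$, hence every value $f(n)$ at integers, contradicting the properness just established.

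The main obstacle I anticipate is precisely this sequencing: to dispatch the constant-factor sub-case of irreducibility cleanly, properness must be in hand already, so the order properness-before-irreducibility is forced. Beyond that, the only non-trivial ingredient is the numerical bound $f(b)>\deg f$, which is routine once $b\geq H(f)+2$ has been extracted from the hypothesis.
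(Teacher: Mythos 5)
Your argument is sound in outline and considerably more explicit than the paper's proof, which consists of a single sentence for properness and delegates irreducibility entirely to the preceding remark that every factorization algorithm doubles as an irreducibility test. Your irreducibility half is precisely the intended argument: the engine from the proof of Theorem~\ref{factoralg} rules out a factorization into two non-constant positive factors (each would take a value $\geq 2$ at $b\geq a+2$), and a constant factor must divide $p$. Your properness half, however, takes a genuinely different route from the paper's. The paper simply observes that $f$ is increasing for $x\geq a$, so that $0<f(a)<f(b)=p$ and the only candidate for a common prime divisor of the values, namely $p$, already fails to divide $f(a)$. You instead combine the introduction's bound $q\leq\deg f$ with the size estimate $f(b)>\deg f$; the estimate itself is correct (your algebra checks, using $H(f)\leq\mb(f)\leq\mfb(f)\leq a\leq b-2$). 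The one thing to patch: the inference $q\leq\deg\bar f\leq\deg f$ presupposes $\bar f\neq 0$ in $\Z_q[x]$, i.e.\ that $q=p$ does not divide the content of $f$ (compare $5x+5$, all of whose values are divisible by $5>1$). In that excluded case one would have $f=p\,g$ with $g$ a non-constant positive divisor of $f$ satisfying $g(b)=1$, which your own engine forbids for $b\geq a+2$, so a single extra sentence closes the hole; but as written your properness argument is silent on it, and since your constant-factor subcase of irreducibility leans on properness, that sentence is genuinely needed. The paper's monotonicity argument sidesteps the issue entirely and is the shorter path, at the cost of leaving the irreducibility half implicit.
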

\begin{proof}
	The polynomial $f$ must be proper since it is increasing for $x\geq a$, and $f(a)$ cannot be divisible by $p$.
\end{proof}

\begin{remark} It has been pointed out to us  that the bounds in Theorem~\ref{factoralg} and Corollary~\ref{irredtest} can be improved in several ways, for example, by using ideas from Murty's paper~\cite{Mu}. Murty proves the following: if  $f(x)=a_mx^m+\cdots +a_0$ and  $f(b)$ is prime for some $b\geq \max_{0\leq i\leq m-1}|a_i/a_m|+2$, then $f(x)$ is irreducible (which falls just short of proving Cohn's Irreducibility Criterion). The statement and proof of Theorem~\ref{factoralg} can also be fine-tuned. However, we feel that the pedagogical value of the last two statements is in the novelty and simplicity of their proofs that make use of a polynomial base, with no explicit need to consider complex roots, and where the only additional ingredient is Mignotte's inequality (or similar results bounding the height). We do not see our factorization method, and indeed the base conversion algorithms in the next section, as being competitive with established computer algorithms. Small examples are just fun and instructive to compute by hand.	
\end{remark}


\section{Polynomial and numeric bases.} \label{primevalues}


\subsection{Polynomial representatives of positive integers in base $b$.}


Since base-$x$ representations of polynomials are central to our paper, we introduce  special notation and terminology for the polynomials that represent base-$b$ expansions of natural numbers:
	Let $c\in\N$.
	\begin{enumerate}
		
		\item The polynomial
		\[ f_c^{(1)}(x)=\frac{x^c-1}{x-1}=x^{c-1}+\cdots +x+1
		\]	
		will be called the {\em polynomial representative of $c$ in base 1}.
		\item Let $b\in\N$, with $b\geq 2$. If 
		$ c=[a_k\cdots a_1a_0]_{b}
		$ 
		is the unique expansion of $c$ in base $b$, with digits $a_i$ between $0$ and $b-1$,  then the polynomial
		\[ f_c^{(b)}(x)=a_kx^k+\cdots +a_1x+a_0
		\]
		will be called the {\em polynomial representative of $c$ in base $b$}.
		
	\end{enumerate}	
	In both of these cases, we have $f_c^{(b)}(b)=c$.

Table~1 below exhibits all   polynomial representatives $f_{17}^{(b)}(x)$ of the prime 17 in bases $b\geq 1$. The first one is irreducible since it is the 17th cyclotomic polynomial, and the rest are irreducible as a result of GCIC. 
 We will show how to generate this table for any $c\in\N$ in a systematic way starting from $f_c^{(1)}(x)$ in the next Proposition. 
	\[\begin{array}{|c|c||c|c||c|c|} \hline  \mbox{Base $b$} & f_{17}^{(b)}(x)   &   \mbox{Base $b$} & f_{17}^{(b)}(x) &  \mbox{Base $b$} & f_{17}^{(b)}(x)         \\  \hline  
1 & x^{16}+\cdots +x+1=\Phi_{17}(x) & 8 & 2x+1 & 15 & x+2                 \\
2 & x^4+1=\Phi_8(x)  &  9 & x+8 &       16 & x+1=\Phi_2(x) \\              
3 & x^2+2x+2=\Phi_4(x+1) &  10 & x+7 &   17 & x         \\ 
4 & x^2+1=\Phi_4(x) &    11 & x+6 &   18 &  17        \\ 
5 & 3x+2   &    12 & x+5 &    19 &    17              \\ 
6 & 2x+5 & 13 & x+4 &   20 & 17                 \\ 
7 & 2x+3 &    14 & x+3 &    \cdots & \cdots  \\   \hline
\end{array}
\] 

\begin{center}  {\small  {\bf Table 1.}  Polynomial representatives of $p=17$           }                          \end{center}	


\subsection{Base conversions.}


\begin{proposition}[Base conversions]\label{seed} Let $a,b,c\in\N$,   $f(x)\in\Z[x]$ be  either a nonzero polynomial with nonnegative coefficients $\leq b-1$ where $b\geq 2$ and $f(b)=c$,   or   $f(x)=(x^c-1)/(x-1)$ (in which case we set $b=1$). Then $f(x)=f_c^{(b)}(x)$, and the following hold:  
	\begin{enumerate}
		
		\item If $c=p$, a prime, then   $f_{c}^{(b)}$ is a PPI-polynomial for all $b\geq 1$.
		\item (Descent) To convert $f(x)=f_c^{(b)}(x)$ into $f_c^{(b-a)}(x)$, where $b-a\in\N$ as well, follow these steps:
		\begin{enumerate}
			\item Substitute $x+a$ for $x$ and expand $f(x+a)$ in powers of $x$. This polynomial will have all positive coefficients. The indeterminate $x$ now represents the base $b-a$.
			\item If all coefficients of the new polynomial are strictly less than $b-a$, stop. Otherwise, proceed to the next step. (If $b-a=1$, then the stopping point is reached when all coefficients are 1.)
			\item Replace all coefficients by their base-$(b-a)$  expansions; substitute $x$ for each $b-a$; simplify. Go to Step (b).  
		\end{enumerate}
		\item (Ascent)	To convert $f(x)=f_c^{(b)}(x)$ into $f_c^{(b+a)}(x)$, follow these steps:
		\begin{enumerate}
			\item Substitute $x-a$ for $x$ and simplify $f(x-a)$. The indeterminate $x$ now represents the base $b+a$.
			\item Convert the positive part of each coefficient to base $b+a$; substitute $x$ for $b+a$; simplify.
			\item If the absolute value of each coefficient in the new polynomial is strictly less than $b+a$, go on to the next step. Otherwise, return to Step (b).
			\item Write the polynomial in base $x$; replace  any digit of the form $x-j$ by $b+a-j$; simplify.
		\end{enumerate}
		\item The ascent and descent procedures are inverses of each other.	
		\item If $1\leq b<b'$ and $b\leq c$, then $f_c^{(b)}>f_c^{(b')}$. For $b>c$, both polynomial representatives are equal to the constant $c$. 
	\end{enumerate}
	
\end{proposition}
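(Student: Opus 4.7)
The plan is to reduce each claim to the uniqueness of base-$b$ representation (Theorem~\ref{basex}) together with straightforward substitutions of the form $x \mapsto x \pm a$. First I would note that the hypothesis already identifies $f$ with $f_c^{(b)}$: for $b=1$ this is the definition of $f_c^{(1)}$, and for $b \geq 2$ a polynomial with coefficients in $[0,b-1]$ that evaluates to $c$ at $x=b$ is by uniqueness the base-$b$ expansion of $c$.

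For (1), I would split by $b$. When $b=1$ and $c=p$ is prime, $f_p^{(1)}(x) = (x^p-1)/(x-1) = \Phi_p(x)$ is the $p$-th cyclotomic polynomial, hence irreducible; all of its coefficients are $1$, so it is positive and primitive (thus proper). For $b \geq 2$, Theorem~\ref{gcic} yields irreducibility, Proposition~\ref{gcicprop} yields properness, and the nonnegativity of the coefficients yields positivity.

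For (2) and (3), the key observation is that if $g(x) := f(x \pm a)$ then $g(b \mp a) = f(b) = c$, so $g$ already evaluates to $c$ at the new target base. For descent ($g(x) = f(x+a)$), the binomial expansion produces nonnegative integer coefficients; for ascent ($g(x) = f(x-a)$), the coefficients are integers of either sign. I would then show that the iterative ``carry/borrow'' steps---rewriting a coefficient $c_i \geq b-a$ as $c_i = q(b-a)+r$ with the surplus $q$ carried to $c_{i+1}$, or absorbing a negative coefficient into a linear digit $(x-j)$ per Theorem~\ref{basex}---preserve the value at the new base while driving each digit into the admissible range. Termination follows from a coefficient-sum monovariant (each descent carry drops $\sum c_i$ by $q(b-a-1) \geq 1$ when $b-a \geq 2$, with the ascent symmetric); the edge case $b-a=1$ terminates at the all-ones pattern $f_c^{(1)}$. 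A final pass in the ascent replaces each linear digit $(x-j)$ by the constant $(b+a)-j$. Uniqueness of base representation then forces the output to equal $f_c^{(b-a)}$ or $f_c^{(b+a)}$. Statement~(4) drops out immediately: $x \mapsto x+a$ and $x \mapsto x-a$ are mutual inverses, and the subsequent normalization is uniquely determined, hence also inverts.

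The hardest part will be (5). The trivial sub-cases are $b > c$ (both polynomials collapse to the constant $c$) and $b \leq c < b'$ (the left side is a non-constant positive polynomial, the right side is the constant $c$). For $1 \leq b < b' \leq c$, I would write $f_c^{(b)}(x)=\sum a_i x^i$, $f_c^{(b')}(x)=\sum a'_i x^i$, and let $j$ be the largest index where $a_j \neq a'_j$; such $j$ exists because identical coefficient vectors would force $\sum a_i b^i = \sum a_i (b')^i$, impossible once some $a_i$ with $i \geq 1$ is positive. I would then show $a_j > a'_j$ as follows: setting $A = \sum_{i>j} a_i b^i$ and $A' = \sum_{i>j} a_i (b')^i$, we have $A \leq A'$, so $\sum_{i \leq j} a_i b^i \geq \sum_{i \leq j} a'_i (b')^i$. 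If $a_j < a'_j$, the bounds
\[ \sum_{i \leq j} a_i b^i \leq (a_j+1) b^j - 1 \quad \text{and} \quad \sum_{i \leq j} a'_i (b')^i \geq (a_j+1)(b')^j \]
produce a contradiction when $j \geq 1$; the case $j=0$ is handled by noting that non-constancy of $f_c^{(b)}$ (and hence some $a_i>0$ with $i\geq 1$) forces $A < A'$ strictly, whence $a_0 > a'_0$ directly. Therefore the leading nonzero coefficient of $f_c^{(b)} - f_c^{(b')}$ is positive, and Corollary~\ref{rules} yields $f_c^{(b)} > f_c^{(b')}$.
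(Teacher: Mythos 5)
Your treatment of parts (1)--(4) follows essentially the paper's route: cyclotomic polynomials together with GCIC and Proposition~\ref{gcicprop} for (1), and for (2)--(4) the observation that $x\mapsto x\pm a$ preserves the value $c$ at the new base, after which carry/borrow normalization and uniqueness of base-$b$ expansions finish the job. Your part (5), however, is a genuinely different argument: you compare digits directly at the highest index where the two representatives disagree, whereas the paper reads the inequality off the descent procedure itself --- each step (substituting $x+a$ for $x$ in a polynomial with nonnegative coefficients, then replacing the integer $b-a$ by the larger element $x$) only increases the polynomial in the order of Corollary~\ref{rules}. Your version is more elementary and self-contained; the paper's reuses the machinery it has already built and avoids digit-by-digit estimates.

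There are two gaps, both at the base-$1$ boundary, which the statement explicitly includes. First, in the descent you assert that the case $b-a=1$ ``terminates at the all-ones pattern $f_c^{(1)}$,'' but this is precisely the point the paper isolates as non-obvious and proves separately (Lemma~\ref{unary}): in base $1$ \emph{any} sum of $c$ distinct powers of $x$ with unit coefficients evaluates to $c$ at $x=1$, so the uniqueness principle you invoke to identify the output does not apply there; one must actually show the algorithm produces the $c$ lowest powers $x^{c-1}+\cdots+x+1$. Second, your key estimate in (5), $\sum_{i\le j}a_ib^i\le (a_j+1)b^j-1$, relies on the digit bound $a_i\le b-1$, which fails for $b=1$ (all base-$1$ digits equal $1$, not $0$); the subcase $1=b<b'\le c$ needs a separate argument, e.g.\ a digit-count comparison ($\deg f_c^{(1)}=c-1\ge\deg f_c^{(b')}$, with equality only when $c=b'=2$, which is checked by hand). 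Both gaps are repairable, but as written your proof does not cover the unary case.
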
 

\begin{proof}
	Part~(1) follows from  the properties of cyclotomic polynomials and Proposition~\ref{gcicprop}. In Parts (2) and (3),   we are simply performing naive base conversions by starting with an approximate expansion and making corrections as necessary: the unique expression is independent of the method. The only point that may not be immediately obvious is conversion from base 2 to base 1, or, to the unary system. Why do we always obtain the lowest $c$ nonnegative powers of $x$ and not just any $c$ distinct powers? This is proven in the following Lemma. Part~(4) is clear by uniqueness of base expansions. Finally, Part~(5) can be seen in the details of the descent procedure, where $x$ is replaced by $x+a$ and $b-a$ is replaced by $x$; all other terms and factors are nonnegative, and we can apply properties of ordered domains.
\end{proof}

In computing, the convention for the unary representation of a positive integer $c$ is $c$ consecutive digits of 1, with no zeros in between or outside. Here is a good reason why:

\begin{lemma} \label{unary}
	Let $c\in\N$, and 
	\[ f(x)=x^k+a_{k-1}x^{k-1}+\cdots +a_1x+a_0\in \Z[x],
	\]	
	with $a_i\in\{ 0,1\}$. Moreover, let $f(2)=c$. Then $f(x)=f_c^{(2)}(x)$, and the descent process in Proposition~\ref{seed} Part~(2) gives us
	\[ f_c^{(1)}(x)=x^{c-1}+\cdots +x+1.
	\]
\end{lemma}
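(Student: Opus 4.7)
The identification $f = f_c^{(2)}$ is immediate from the uniqueness of the base-$2$ expansion. For the descent, let $T$ denote the step-(c) operator with $b - a = 1$: it sends a polynomial $g(x) = \sum_i c_i x^i$ with nonnegative integer coefficients to
\[ T(g)(x) \;=\; \sum_i \frac{x^{c_i}-1}{x-1}\, x^i . \]
Since $(x^m - 1)/(x-1)$ equals $m$ at $x = 1$, we have $T(g)(1) = g(1)$, and $T$-fixed points are exactly the polynomials whose coefficients all lie in $\{0, 1\}$. The descent starts with $\widetilde f(x) := f(x+1)$, which has nonnegative coefficients and $\widetilde f(1) = f(2) = c$.

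The plan is to track the invariant
\[ \sigma_g(j) \;:=\; \sum_{m \leq j} c_m \;\geq\; \min(j+1,\, c) \quad\text{for all } j \geq 0, \]
along the orbit $\widetilde f, T(\widetilde f), T^2(\widetilde f), \dots$. For the initial polynomial, writing $\widetilde f = \sum_{i \in \Sigma}(x+1)^i$ with $\Sigma := \{i : a_i = 1\}$ and $k := \max \Sigma$, the term $(x+1)^k$ alone contributes $\binom{k}{m} \geq 1$ to each coefficient at positions $m \leq k$, yielding $\sigma_{\widetilde f}(j) \geq j + 1$ for $j \leq k$; the cases $j > k$ are trivial since $\sigma_{\widetilde f}$ already attains the total $c$ there. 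The preservation of the invariant under $T$ is the main obstacle. A direct count shows that the coefficient of $x^m$ in $T(g)$ equals $|\{i \leq m : c_i \geq m - i + 1\}|$, so $\sigma_{T(g)}(j) = \sum_{i \leq j} \min(c_i,\, j - i + 1)$, and one proceeds by induction on $j$ with a two-way case split: if some $i \leq j$ satisfies $c_i \geq j - i + 1$ then $\sigma_{T(g)}(j) \geq \sigma_{T(g)}(j-1) + 1$ and the induction closes directly; otherwise $c_i \leq j - i$ for all $i \leq j$, forcing $c_j = 0$ and thereby $\sigma_g(j-1) = \sigma_g(j) \geq j+1$, which again suffices.

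Granted the invariant, the case $j = c - 1$ yields $\sigma_g(c-1) = c$ (both $\leq$ and $\geq c$), so every iterate is supported on $\{0, 1, \dots, c-1\}$ and the orbit lies in a finite set. Since
\[ T(g)(2) - g(2) \;=\; \sum_i 2^i\bigl(2^{c_i} - c_i - 1\bigr) \;>\; 0 \]
whenever some $c_i \geq 2$, the strictly increasing integer sequence $\{T^n(\widetilde f)(2)\}$ must stabilize at a fixed point $g$ after finitely many steps. At this $g$ the coefficients lie in $\{0, 1\}$ and sum to $c$, so $g = \sum_{i \in S'} x^i$ with $|S'| = c$; the invariant at $j = c - 1$ reads $|S' \cap \{0, \dots, c-1\}| = c$, which forces $S' = \{0, 1, \dots, c-1\}$. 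Therefore $g(x) = 1 + x + \cdots + x^{c-1} = f_c^{(1)}(x)$, completing the proof.
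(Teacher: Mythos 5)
Your proof is correct, and it reaches the conclusion by a genuinely different mechanism than the paper. The paper's argument is a prefix-stabilization induction: after the substitution $x\mapsto x+1$ all coefficients up to the leading one are positive, and each application of the replacement step permanently locks in one more low-order digit, so the evolving polynomial acquires the tail $1$, then $x+1$, then $x^2+x+1$, and so on until $x^{c-1}+\cdots+x+1$ is reached; the preservation of the value at $x=1$ is what pins down the final length. You instead formalize the step as an operator $T$, prove that the partial-sum invariant $\sigma_g(j)\ge\min(j+1,c)$ is preserved (your two-way case split is sound, and the identity $\sigma_{T(g)}(j)=\sum_{i\le j}\min(c_i,\,j-i+1)$ checks out), use the invariant at $j=c-1$ together with $g(1)=c$ to confine the orbit to a finite set, and then force termination with the strictly increasing potential $g(2)$; the fixed point is characterized as a $c$-element subset of $\{0,\dots,c-1\}$, hence all of it. What your route buys is rigor and a clean termination certificate: the paper's claim that ``no powers of $x$ up to the largest one will ever vanish'' and that digits stabilize one per step is plausible but left informal, whereas your invariant makes the ``no gaps in the support'' conclusion—precisely the point the lemma is meant to settle—an immediate consequence of a verified inequality. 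The paper's version, in exchange, is shorter and tracks the intuitive picture of digits filling in from the bottom. One cosmetic slip: in your second case you write $\sigma_g(j-1)=\sigma_g(j)\ge j+1$ where the invariant only gives $\ge\min(j+1,c)$; this is harmless since that bound is all you need to conclude $\sigma_{T(g)}(j)=\sigma_g(j)\ge\min(j+1,c)$.
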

\begin{proof}
	By induction. After $f(x)$ is replaced by the larger polynomial
	\[ g(x)=f(x+1)=(x+1)^k+a_{k-1}(x+1)^{k-1}+\cdots +a_1(x+1)+a_0,
	\]	
	all coefficients of $g(x)$ up to the leading one become strictly positive, and the constant term becomes	
$ g(0)=f(1)=1+a_{k-1}+\cdots +a_1+a_0
$. 
	As instructed, we replace all coefficients $a$ of $g$ by $x^{a-1}+\cdots +x+1$, and obtain a larger polynomial. If $f(1)$ is not already equal to 1, then it will be replaced by 
	$ [x^{f(1)-1}+\cdots +x]+1 
	$, 
	where the rest of the polynomial $g(x)$ is $O(x)$. Therefore, the constant term permanently becomes 1, and no powers of $x$ in $g$ up to the largest one will ever vanish. Let $a\in\N$ be the coefficient of $x$ in $g$. If $a> 1$, then the next step brings the term $(x^{a-1}+\cdots +x+1)x$ in addition to the constant term 1, and the rest of the polynomial is now $O(x^2)$. In this manner, we guarantee that the smallest two terms of the evolving polynomial will be $x+1$ for the rest of the procedure, and so on, until we reach 
$ x^{c-1}+\cdots +x+1 
$, 
	which gives us the value $c$ at $x=b=1$.	 
\end{proof}

\begin{example} \label{ex17} Let us verify a few examples from Table~1. In ascending order, we have
$17=[10001]_2$, giving us $f_{17}^{(2)}(x)=x^4+1$. The next base is $b=3=2+1$. We compute $  f(x-1)	=x^4-4x^3+6x^2-4x+2$,  then replace $4=b+1$ by $x+1$ and $6=2b\,$ by $2x$, and obtain the polynomial
\[ x^3-x^2-x+2=(x-2)x^2+(x-1)x+2.
\]
A final substitution of $b=3$ into the digits only gives us $f_{17}^{(3)}(x)= x^2+2x+2$. 
	In descending order, we know that $f_{17}^{(17)}(x)=x$, and $f_{17}^{(16)}= f_{17}^{(17)}(x+1)=x+1$ (no change needed). Let us pick up the sequence at $f_{17}^{(9)}(x)=x+8$. The next base is $b=8=9-1$. Hence,
	\[ f_{17}^{(9)}(x+1)=x+9=x+(8+1)\mapsto x+(x+1)=2x+1=f_{17}^{(8)}(x).
	\]
	
\end{example}


\section{Families of irreducibles with the same prime value at large integers.}


The GCIC construction as well as Corollary~\ref{irredtest} make it clear that only the prime values of  polynomials at large $n\in\N$ are significant for characterizing irreducibility. Given a positive rational prime $p$, let us define a family $\cF_p\subset \Z[x]$ that consists of the constant polynomial $p$ and the PPI-polynomials with this property:

\begin{definition} \label{fp}
	Given a positive rational prime $p$, consider the constant polynomial $p$, the cyclotomic polynomial $\Phi_p(x)$, and any PPI-polynomial $g(x)\in\Z[x]$ such that $g(b)=p$ for some $b\geq \mb(g)$. This collection, which includes all $f_p^{(b)}(x)$ for $b\geq 1$,	will be denoted by $\cF_p$ and called the {\em family of irreducible polynomials associated with $p$}.
\end{definition}

\begin{theorem}\label{partone} Given any positive rational prime  $p$, the family $\cF_p$ of irreducible polynomials associated with $p$ can be generated from the seed polynomial $\Phi_p(x)$ by the following rules:
	\begin{enumerate}
		\item The ascent procedure described in Proposition~\ref{seed};
		\item	Replacing selected coefficients $a_i$ of a polynomial $f_p^{(b)}(x)$ by the linear digit $x-(b-a_i)$. Note that this can be done for finitely many zero coefficients as well, including those that are associated with $x^n$ 	where $n>\deg f_p^{(b)}(x)$.
	\end{enumerate}
\end{theorem}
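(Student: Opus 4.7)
The family $\cF_p$ consists of three kinds of elements: the constant polynomial $p$, the seed $\Phi_p(x)$, and any PPI-polynomial $g(x)$ satisfying $g(b_0)=p$ for some integer $b_0\geq \mb(g)$. Since $\Phi_p$ is given as the seed, I plan to split the argument into two stages aligned with the two generation rules: first, Rule~(1) produces the entire chain $\{f_p^{(b)}\}_{b\geq 1}$; second, Rule~(2) applied to appropriate members of this chain reaches every remaining element of $\cF_p$.

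For the ascent stage, one successive application of Proposition~\ref{seed} Part~(3) converts $f_p^{(b)}(x)$ into $f_p^{(b+1)}(x)$, so iterating from $\Phi_p(x)=f_p^{(1)}(x)$ produces every $f_p^{(b)}(x)$ with $b\geq 2$. Each such polynomial with $b\leq p$ is PPI by Part~(1); for $b\geq p+1$ the expansion of $p$ in base $b$ is a single digit and $f_p^{(b)}(x)=p$, which gives the constant member of $\cF_p$. Thus the seed, the constant $p$, and the skeleton family $\{f_p^{(b)}\}$ are all accounted for by Rule~(1).

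The main step is the substitution stage. Given a PPI $g\in\cF_p$ with $g(b_0)=p$ and $b_0\geq \mb(g)$, I write $g$ in its unique base-$x$ form from Theorem~\ref{basex},
\[
g(x)=\sum_{i=0}^{m} b_i(x)\,x^i,\qquad b_i(x)\in\{0,1,\ldots,\mb(g)-1\}\cup\{x-1,\ldots,x-\mb(g)\}.
\]
Because $b_0\geq \mb(g)$, each digit value $b_i(b_0)$ falls in $\{0,1,\ldots,b_0-1\}$: a constant digit $a_i$ obeys $a_i\leq \mb(g)-1\leq b_0-1$, while a linear digit $x-j$ evaluates to $b_0-j\in\{b_0-\mb(g),\ldots,b_0-1\}$. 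Hence $p=g(b_0)=\sum b_i(b_0)\,b_0^i$ is a bona fide base-$b_0$ expansion, which by uniqueness must agree term by term with the standard expansion whose digits are the coefficients of $f_p^{(b_0)}(x)$ (extended by $a_i=0$ for $i$ beyond $\deg f_p^{(b_0)}$). Each digit $b_i(x)$ of $g$ is therefore either the coefficient $a_i$ itself or the linear digit $x-(b_0-a_i)$, which is exactly the replacement prescribed by Rule~(2).

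The conceptual content is light once Theorem~\ref{basex} and Proposition~\ref{seed} are in hand; the only bookkeeping subtlety is at positions $i>\deg f_p^{(b_0)}$, where a zero coefficient may be replaced by the linear digit $x-b_0$. Such a replacement pushes $\mb(g)$ up to $b_0$, saturating the inequality $b_0\geq \mb(g)$, and explains the theorem's parenthetical authorization for Rule~(2) to act on the ``invisible'' zero coefficients of high degree. No separate irreducibility check is needed on the output side, since the theorem claims only that every element of $\cF_p$ is produced by the two rules, not the converse.
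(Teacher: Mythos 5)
Your proposal is correct and takes essentially the same route as the paper: generate the chain $\{f_p^{(b)}\}_{b\geq 1}$ by ascent from $\Phi_p$, then show that any PPI member $g$ with $g(b_0)=p$ and $b_0\geq \mb(g)$ has base-$x$ digits whose values at $b_0$ form the unique base-$b_0$ expansion of $p$, so that $g$ arises from $f_p^{(b_0)}$ by Rule~(2). Your explicit digit-range check ($a_i\leq \mb(g)-1\leq b_0-1$ and $b_0-j\leq b_0-1$) even spells out a detail the paper's version leaves implicit.
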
 

\begin{proof}
	Proposition~\ref{seed} and the last example show us how to obtain all $f_p^{(b)}(x)$ for $b\geq 2$ from $f_p^{(1)}(x)=\Phi_p(x)$. Let $g(x)$ be any PPI-polynomial such that $g(b)=p$ for some $b\geq \mb(g)$. If all coefficients of $g$ are nonnegative, then $g(x)=f_p^{(b)}(x)$, and we are done. If at least one coefficient of $g$ is negative, then we write $g(x)=\sum b_i(x)x^i$ in base $x$, and replace all $b_i(x)$ by $b_i(b)$. The resulting polynomial, $f(x)$, also satisfies the property $f(b)=p$ as well as $b\geq \mb(f)$ (some $(x-a_i)$'s in $g(x)$ become $(b-a_i)$'s, where $b>b-a_i$). Moreover, its coefficients are nonnegative, which makes $f=f_p^{(b)}$. Therefore, any PPI-polynomial $g\in\cF_p$  can be obtained from some $f_p^{(b)}(x)$ by the reverse procedure: (i)~splitting the  coefficients of $f_p^{(b)}$ into two finite subsets, indexed by $I$ and $J$, and (ii)~replacing all $a_i$ with $i\in I$ with the linear digits $x-(b-a_i)$. It is not guaranteed that each such procedure will always give us an irreducible (hence, PPI) polynomial. For example, we cannot change {\em all} coefficients of the base-1 polynomial  $f_p^{(1)}(x)=\Phi_p(x)$, which would give us a factor of $x-(p-1)$, nor can we replace the coefficient 1 in $f_p^{(p)}(x)=x$ with the factor $x-(p-1)$. However, we can check whether $g(x)$ is   irreducible or not by applying Theorem~\ref{factoralg}. 
\end{proof}

\begin{corollary}\label{usual}
	For each positive rational prime $p$, the set $\cF_p$ is infinite, and always contains
	\[ \{  x+1,\, x,\, x-1,\, x-2,\, x-3,\,\dots\, ,p\} .
	\]	
\end{corollary}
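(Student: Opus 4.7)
The plan is to exhibit each polynomial in the listed set directly as a member of $\cF_p$ via Definition~\ref{fp}, and then to observe that one of the families produced is already infinite. The constant polynomial $p$ belongs to $\cF_p$ by fiat. For $x$ itself, the base-$p$ expansion $p = [10]_p$ gives $f_p^{(p)}(x) = x$, so $x \in \cF_p$. For $x+1$ I will split into cases: when $p = 2$, the polynomial $x+1 = \Phi_2(x) = \Phi_p(x)$ is the seed of the family; when $p \geq 3$, the expansion $p = (p-1)+1 = [11]_{p-1}$ yields $f_p^{(p-1)}(x) = x + 1$, placing $x+1$ in $\cF_p$.

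The bulk of the argument concerns the infinite subfamily $\{x - k : k \geq 1\}$. For each such $k$, I will set $g(x) = x - k$ and verify the defining clauses of a PPI-polynomial: (i) leading coefficient $1$, so $g > 0$; (ii) $g$ is linear, hence irreducible in $\Z[x]$; (iii) $g(n+1) - g(n) = 1$ for every $n$, so the gcd of the values of $g$ on $\N$ is $1$ and $g$ is proper. It remains to produce a base $b$ with $g(b) = p$ and $b \geq \mb(g)$. Taking $b = p+k$ gives $g(b) = p$, and $\mb(g) = k$ because the unique base-$x$ digit of $g$ is $x - k$, which is admissible in the alphabet $\{0,\dots,a-1\} \cup \{x-1,\dots,x-a\}$ exactly when $a \geq k$. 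Since $b = p + k \geq k = \mb(g)$, we conclude $g \in \cF_p$.

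Equivalently, one may derive the same conclusion through Theorem~\ref{partone}: ascending from the seed $\Phi_p(x) = f_p^{(1)}(x)$ one reaches the constant polynomial $f_p^{(p+k)}(x) = p$ (valid because $p < p+k$), and then rule~(2) of that theorem permits the replacement of the digit $a_0 = p$ by the linear digit $x - ((p+k) - p) = x - k$, producing $g(x) = x - k$. Infiniteness of $\cF_p$ is then automatic, since the polynomials $x - k$ for $k \in \N$ are pairwise distinct.

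I do not expect a genuine obstacle; the corollary is a direct verification against Definition~\ref{fp}. The only mildly subtle point is the identification $\mb(x-k) = k$, which is forced by the structure of the permitted digit alphabet, and the case split for $x+1$ between $p=2$ (handled by $\Phi_p$) and $p \geq 3$ (handled by $f_p^{(p-1)}$).
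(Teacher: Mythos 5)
Your proposal is correct and follows essentially the same route as the paper's proof: obtain $x+1$ and $x$ as the base-$(p-1)$ and base-$p$ representatives of $p$, and obtain each $x-k$ by replacing the constant representative $p$ (for base $b=p+k>p$) with the linear digit $x-(b-p)$. Your version merely adds explicit verifications (the PPI check, $\mb(x-k)=k$, and the $p=2$ case for $x+1$) that the paper leaves implicit.
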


\begin{proof}
	For bases $b>p$, the polynomial representative of $p$ is $p$ itself, and we may replace it with the linear polynomial	$g(x)=x-(b-p)$.  In addition, for $b=p$ we have 	$x$, and for $b=p-1$ we have $x+1$.
\end{proof}

\begin{example}
	Let us compute some of the elements of $\cF_2$ of degree $\leq 2$. We start with the set in Corollary~\ref{usual}: 
	\[ \{  x+1,\, x,\, x-1,\, x-2,\, x-3,\,\dots\, ,2\}.
	\]
The distinct base polynomials for $p=2$ are $2$ ($b\geq 3$), $x$ ($b=2$), and $x+1$ ($b=1$). The constant polynomial $2$ has already been replaced by $x-(3-2)=x-1$, $(x-(4-2)=2$, etc.	What if we also include the zero coefficient  in $0x+2$? Here are some of the additional polynomials to be considered:
\[\begin{array}{|c|c|c|c|c|} \hline b &   \mbox{0-replacement} & \mbox{2-replacement} & \mbox{result} & \mbox{PPI?}\\ \hline  
3 & x-(3-0)   & 2 & x^2-3x+2 & \mbox{no} \\ \hline 
3 &   x-(3-0) & x-(3-2) & x^2-2x-1  &  \mbox{yes}  \\
\hline 
4 & x-(4-0) & 2 &  x^2-4x+2 &   \mbox{yes} \\ \hline 
4 & x-(4-0) & x-(4-2) &x^2-3x-2 & \mbox{no}\\ \hline
5 & x-(5-0) & 2 &x^2-5x+2 & \mbox{no}\\ \hline
5 & x-(5-0) &  x-(5-2)  &x^2-4x-3 & \mbox{yes}\\ \hline
\vdots & \vdots & \vdots & \vdots& \vdots\\ \hline
\end{array}
\]
There are potentially infinitely many quadratics in $\cF_2$ to be obtained in this manner. 
Next, we consider the base polynomial $x=x+0$. We have observed that we cannot replace the coefficient 1 by $x-(2-1)$ and still obtain an irreducible polynomial. But if we put the constant term 0 into the mix, then we obtain
\[ (x-(2-1))x+(x-(2-0))=x^2-2\;\;\mbox{and}\;\; x+(x-(2-0))=2x-2,
\]
where the first polynomial is PPI and the second is not. For the last base polynomial, $x+1$, we cannot change both coefficients but can try to do so one at a time: we have
\[ (x-(1-1))x+1=x^2+1\;\;\mbox{and}\;\; x+(x-(1-1))=2x,
\]
	where once again $x^2+1$ is PPI but $2x$ is not. Technically, we cannot include $x^2+1$ in $\cF_2$, since we need to substitute $b=1<\mb(x^2+1)=2$. Hence, we have the following polynomials of degree $\leq 2$ in $\cF_2$, where the unknown quadratic polynomials are of the form $(x-b)x+2$ or $(x-b)x+(x-(b-2))$ for $b\geq 6$:
\[ \{ x^2-2,\, x^2-2x-1,\, x^2-4x+2,\,   x^2-4x-3,\dots,        x+1,\, x,\, x-1,\, x-2,\, x-3,\,\dots\, ,2\}.
\]	
The degree of polynomials in $\cF_2$, or in any $\cF_p$, do not {\em a priori} have an upper bound, as we are allowed to change the zero coefficients of large powers of $x$ in a base polynomial. For example, for $b=6$, we have
\[ (x-(6-0))x^2+2=x^3-6x^2+2\in\cF_2.\]

\end{example}

Finally, we have the following summary. Part~(2) especially shows how restrictive $\cF_p$ is, although it is infinite. 
\begin{theorem}\label{BHequal}
	Let $p$ and $q$ be positive rational primes. Then 
	\begin{enumerate}
		\item $\cF_p=\cF_q$ if and only if $p=q$.
		\item Given any prime $p$, there exist infinitely many PPI-polynomials $f$ that attain  the value $p$ at some  integer but no translate $f(x-b)$, $b\in\Z$, is in $\cF_p$.
		\item  (BH) holds if and only if every PPI-polynomial $f$ belongs to $\cF_p\,$ for some prime $p$.
	\end{enumerate}
\end{theorem}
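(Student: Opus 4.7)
The plan is to handle the three parts in order, with the main computational work being an explicit construction for Part~(2) and an appeal to Ribenboim's reformulation for Part~(3).

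For Part~(1), the forward direction is trivial. For the converse, I would note that the constant polynomial $p$ lies in $\cF_p$ by definition, so $\cF_p = \cF_q$ forces $p \in \cF_q$. Membership in $\cF_q$ comes in three varieties: the constant $q$, the cyclotomic polynomial $\Phi_q$ (which has degree $q-1 \geq 1$), and PPI-polynomials (which are non-constant by the stated definition of ``proper''). The only constant candidate is $q$ itself, so $p = q$.

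For Part~(2), I would exhibit an explicit infinite family, namely the Eisenstein polynomials $f_k(x) = x^k + p$ for $k \geq 2$. Each $f_k$ is plainly positive and primitive, is irreducible by Eisenstein at $p$, and is proper because $\gcd(f_k(0), f_k(1)) = \gcd(p, p+1) = 1$. Each attains the value $p$ at the integer $x = 0$. To rule out translates, I would analyse $g(x) = f_k(x-b)$ case by case: when $b = 0$, $f_k \notin \cF_p$ because the unique integer at which $f_k = p$ is $0$, while $\mb(f_k) = p + 1$; when $b > 0$, the binomial expansion $(x-b)^k = x^k - kbx^{k-1} + \cdots$ forces $H(g) \geq kb \geq 2b$, so $\mb(g) \geq 2b$, while $g$ attains $p$ only at $x = b$; when $b < 0$, $g$ has all positive coefficients and attains $p$ only at the negative integer $x = b$, which cannot meet $\mb(g) \geq 1$. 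Since the $f_k$ have pairwise distinct degrees, they yield infinitely many translation classes of PPI-polynomials, which is more than enough.

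For Part~(3), I would invoke the equivalence of (BH) with Ribenboim's reformulation cited immediately after Conjecture~\ref{bh}: a proper irreducible polynomial with positive leading coefficient takes a prime value at some $n \in \N$ if and only if it does so infinitely often. Assuming (BH), any PPI-polynomial $f$ attains prime values at infinitely many natural numbers, so in particular at some $b \geq \mb(f)$; setting $p = f(b)$ places $f \in \cF_p$. Conversely, if every PPI-polynomial belongs to some $\cF_p$, then $f(b) = p$ for some prime $p$ and some $b \geq \mb(f) \geq 1$, witnessing Ribenboim's form of (BH).

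The only step requiring genuine care is Part~(2), where one must verify that the minimum base of a translated polynomial grows strictly faster than the point at which the value $p$ is attained; the coefficient $-kb$ of $x^{k-1}$ in $(x-b)^k$ supplies this growth cheaply. Part~(1) is a one-line inspection and Part~(3) is essentially a dictionary translation resting on the already-cited Ribenboim equivalence, so no deep obstacle is anticipated.
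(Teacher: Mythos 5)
Your proposal is correct and follows essentially the same route as the paper: Part~(1) by inspecting the unique constant polynomial in each family, Part~(2) via the very same family $x^m+p$ ($m\geq 2$) with the bound $b<mb\leq H(g)\leq \mb(g)$ coming from the coefficient of $x^{m-1}$ in the translate, and Part~(3) by combining (BH) with Ribenboim's single-value reformulation. Your added details (Eisenstein for irreducibility, $\gcd(p,p+1)=1$ for properness, the explicit three-case split on the sign of $b$) merely flesh out steps the paper leaves implicit.
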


Theorem~\ref{partone} gives us the finitely many rules of membership in $\cF_p$, which may be helpful in studying (BH). 

\begin{proof} As the only constant polynomial in $\cF_p$ is $p$ itself,  the first part is straightforward. For Part~(2), consider the irreducible polynomial $f(x)=x^m+p$ for any $m\geq 2$, which is PPI but not in $\cF_p$. The value $p$ is attained only at $x=0$, and any translate $g(x)=f(x-b)$ would attain the same value only at $x=b$, which needs to be positive for eligibility. Let us fix $b\geq 1$ and $m\geq 2$. Since 
	$ g(x)=x^m-mb\, x^{m-1}+O(x^{m-2}) 
	$, 
	we have 
	$ b< mb\leq H(g)\leq \mb(g) 
	$. 	
	As for Part~(3), any PPI-polynomial $f$ would have infinitely many prime values past $x=\mb(f)$ under (BH). The converse is also clear by the equivalent statement to Conjecture~\ref{bh}.
\end{proof}


\section{Conclusion.}


Rational primes and irreducible polynomials with integer coefficients serve the same purpose of being multiplicative building blocks in their respective structures. Moreover, both integers and polynomials have unique representations in various bases that mirror each other: the base-$x$ representation of polynomials not only highlights the analogy, but also allows us to uniquely associate to each prime a finitely generated family of irreducible polynomials that attain the prime value. We hope that the results and techniques in the article will make a new generation of mathematicians take a closer look at the Bouniakowsky Hypothesis with fresh eyes. Considering that it is a generalization of Dirichlet's Theorem, though, we suggest that the proof not be assigned as homework; not every student can be a George Dantzig~\cite{CJW}!


\appendix
\section{Appendix: Arithmetic in base $x$.}


\label{app}

The rule for {\bf addition} of digits in base $x$ is given by Table~2, where each carried digit is at most 1 (indicated by $(c)$):
	\[ \begin{array}{|c|c|c|c|}\hline   i,j\geq 0& i>j\geq 0 & j\geq i> 0\; (c) & i,j>0 \; (c)  \\ \hline
	[(i)]_x & [(x-i)]_x & [(x-i)]_x & [(x-i)]_x\\  
	{}  [(j)]_x & [(j)]_x & [(j)]_x & [(x-j)]_x\\ \hline 
	[(i+j)]_x & [(x-(i-j))]_x & [(1)(j-i)]_x & [(1)(x-(i+j))]_x\\ \hline
	\end{array}\] 
\begin{center} {\small {\bf Table 2.} Addition of digits in base $x$}\end{center}

	Consider the  positive polynomials
	\[ f(x)=2x^3-x^2+5x-6=[ (1)(x-1)(4)(x-6)]_x                   
	\]
	and
	\[ g(x)=x^3-x-1 =[  (0)(x-1)(x-2)(x-1)          ]_x.
	\]
	Direct polynomial addition and conversion into base $x$ gives us
	\[ f(x)+g(x)=3x^3-x^2+4x-7=[ (2)(x-1)(3) (x-7)              ]_x.
	\]
	Digital addition is equally easy:
	\[ \begin{array}{ccccc} \mbox{\em carry} & 1 & 1 & 1 & {} \\ \hline
	{} & (1) & (x-1) & (4) & (x-6)  \\
	+ & (0) & (x-1) & (x-2) & (x-1) \\ \hline
	{} & (2) & (x-1) & (3) & (x-7)
	\end{array}\]
The rules for {\bf subtraction} of digits, given in Table~3 below,  similarly require at most a borrowed digit of 1 from the left (indicated by $(b)$), which amounts to adding $x$ to the top digit. The analogy is borrowing 1 from the left  and adding 10, the base, to the digit that comes up short in ordinary decimal subtraction. If a nonzero digit is to be subtracted from $(0)$, which may itself be preceded by a string of $(0)$'s, then we borrow $(1)$ from the first nonzero digit to the left of the string of zeros, send it down the line,  and never have to borrow from that lender again, as $x$ is larger than any digit. In short, any digit lends 1 to the right at most once.
	\[\begin{array}{|c|c|c|}  \hline
	i\geq j\geq 0 & j>i\geq 0 \; (b)  & i>0,j\geq 0\\ \hline 
	[(i)]_x & [(1)(i)]_x & [(x-i)]_x \\
	{}  [(j)]_x & [(j)]_x & [(j)]_x  \\ \hline
	[(i-j)]_x & [(x-(j-i))]_x & [(x-(i+j))]_x \\ \hline \hline
 i\geq 0, j>0 \; (b) & j\geq i>0 & i>j>0\; (b)\\ \hline 
 [(1)(i)]_x & [(x-i)]_x & [(1)(x-i)]_x \\
{} [(x-j)]_x & [(x-j)]_x & [(x-j)]_x \\ \hline
	 [(i+j)]_x & [(j-i)]_x & [(x-(i-j))]_x\\ \hline
	\end{array}\]
\begin{center} {\small {\bf Table 3.} Subtraction of digits in base $x$}\end{center}	

	Let us subtract the same two polynomials directly, noting that $f>g$:	
	\[ f(x)-g(x)=x^3-x^2+6x-5=[(x-1)(5)(x-5)    ]_x.
	\]	
	Contrast with digital subtraction:
	\[ \begin{array}{ccccc} \mbox{\em borrow} & -1  & -1  & -1  &   \\ \hline
	{} & (1) & (x-1) & (4) & (x-6)  \\
	- & (0) & (x-1) & (x-2) & (x-1) \\ \hline
	{} & (0) & (x-1) & (5) & (x-5)
	\end{array}\]
{\bf Multiplication} in base $x$  is done one digit at a time, followed by staggered addition. Table~4 shows the rules for digit multiplication.
	\[\begin{array}{|c|c|c|}\hline  i,j\geq 0 & i,j>0 & i,j>0 \\ \hline 
	[(i)]_x & [(x-i)]_x & [(x-i)]_x \\
	{}[(j)]_x & [(j)]_x & [(x-j)]_x\\ \hline
	[(ij)]_x & [(j-1)(x-ij)]_x & [(x-(i+j))(ij)]_x\\ \hline 
	\end{array}
	\]
\begin{center} {\small {\bf Table 4.} Multiplication of digits in base $x$}\end{center}

	For the same polynomials $f$ and $g$ as in the last two examples, we find	
	\bea f(x)g(x)&=& 6+x-4x^2-7x^3+3x^4-x^5+2x^6\\
	&=& [(1)(x-1)(2)(x-8)(x-4)(1)(6)                       ]_x          .                  
	\eea
	The first one-digit multiplication is done as follows (the carry digits on top are to be added to the product, as is usual):
	\[ \begin{array}{cccccc}{} & {} & (x-2)  & (4)  & (x-7)  &  {} \\ \hline
	{} &{} & (1) & (x-1) & (4) & (x-6)  \\
	\times & {}  & {} & {} & {} & (x-1) \\ \hline
	{} &(1) & (x-3) & (5) & (x-11) & (6)
	\end{array}\]
	The second one-digit multiplication is
	\[ \begin{array}{cccccc}{} & {} & (x-3)  & (4)  & (x-8)  &  {} \\ \hline
	{} &{} & (1) & (x-1) & (4) & (x-6)  \\
	\times & {}  & {} & {} & {} & (x-2) \\ \hline
	{} &(1) & (x-5) & (6) & (x-16) & (12)
	\end{array}\]
	Multiplication by the last digit of $g$ is the same as the first. Finally, we stagger the results and add them:
	\[ \begin{array}{cccccccc} {} & {} & {} & {} & (1) & (x-1) & (4) & (x-6)  \\
	\times  & {} & {} & {} & {} & (x-1) & (x-2) & (x-1) \\ \hline
	{} & {} & {} & (1)  & (x-3) & (5) & (x-11) & (6)\\
	{} & {} &(1) & (x-5) & (6) & (x-16) & (12) & (0) \\
	+ & (1)  & (x-3) & (5) & (x-11) & (6)& (0)& (0) \\ \hline
	{} & (1) & (x-1) & (2) & (x-8) & (x-4) & (1) & (6)
	\end{array}\]

{\bf Division by a monic polynomial} is always possible in $\Z[x]$. (See the Division Algorithm, Theorem~\ref{divalg}.) For example, for
	\[ f(x)=2x^4-5x^3+7x-1=[(1)(x-5)(0)(6)(x-1)]_x
	\]
	and
	\[ g(x)=x^2+x-3= [(1)(0)(x-3)]_x   ,
	\]	
	we have the long-division quotient and remainder
	\[ q(x)= 2x^2-7x+13   
	\;\;\mbox{and}\;\; r(x)= -27x+38,
	\]	
where $r(x)$ is not   positive, but its degree (resp., its ``absolute value'') is still strictly less than the degree of $g(x)$ (resp., $g(x)$ itself). Hence, we may subtract 1 from the quotient and add $g(x)$ to the remainder, which will immediately give us a positive expression. The adjusted quotient and remainder are then  
	\[ q(x)= 2x^2-7x+13-1   =2x^2-7x+12=[(1)(x-7)(12)]_x
	\]
	and
	\[0\leq  r(x)=(-27x+38)+(x^2+x-3)   = x^2-26x+35= [(x-26)(35)]_x <g(x).
	\]
	Digital division yields the result directly, as shown below:
	\[\begin{array}{ll} {} &\;\;\;\;\;\;\;\;\;\;\;\;\;\;\;\;\;\;\;\;\;\;\, (1)\;\;\; (x-7)\;\;\;\, (12)\\
	\cline{2-2} (1)\;(0)\;(x-3) \; )\!\!\!\!\! & \;\,  (1)\; (x-5)\;\;\;\;\, (0)\;\;\;\;\;\; (6)\;\; \;\;\;\,(x-1) \\
	{} & \;\, (1)\;\;\;\; (0) \;\;\;\;\, (x-3) \; \;\; (0)\;\;\;\;\;\;\;\;\, (0)\\   \cline{2-2} 
	{} & \;\, (0) \; (x-6)\;\;\;\;\, (3)\;\;\;\;\;\; (6)\;\;\;\;\;\, (x-1)\\
	{} & \;\;\;\;\;\;\; (x-7)\; (x-10)\; (21) \;\;\;\;\;\;\;\, (0)\\ \cline{2-2}
	{} & \;\;\;\;\;\;\; \;\;\;\, (0)\;\;\;\;\;\;\; (12)\;\,  (x-15)\; (x-1)\\ 
	{} & \;\;\;\;\;\;\; \;\;\;\;\;\;\;\;\;\;\; \;\;\;\; (12)\;\;\;\;\, (11)\;\;\;\; (x-36)\\  \cline{2-2} 
	{} & \;\;\;\;\;\;\; \;\;\;\;\;\;\;\;\;\;\; \;\;\;\;\;  (0)\;\; (x-26)\;  \;\;\,  (35)
	\end{array}
	\] 

\end{document}